\newcommand{\diag}{\mathop{\mathrm{diag}}}
\definecolor{laurelgreen}{RGB}{169,186,157}
\definecolor{laurelgreen-solid}{RGB}{92, 112, 79}
\definecolor{blueyonder}{RGB}{80,114,167}
\pgfplotsset{
compat=1.11,
legend image code/.code={
\draw[mark repeat=2,mark phase=2]
plot coordinates {
  (0cm,0cm)
  (0.1cm,0cm)        %
  (0.2cm,0cm)         %
};%
}
}
\setlist[enumerate]{leftmargin=.5in}
\setlist[itemize]{leftmargin=.5in}
\newcommand{\newremark}[2]{
  \theoremstyle{plain}
  \theoremheaderfont{\normalfont\itshape}
  \theorembodyfont{\normalfont\itshape}
  \theoremseparator{.}
  \theoremsymbol{}
  \newtheorem{#1}[theorem]{#2}
}
\def\addressncsu{Department of Mathematics, North Carolina State University,
  Raleigh, NC, USA}
\def\addressnyu{Courant Institute of Mathematical Sciences, New York
  University, New York, NY, USA}
\def\addressucm{Department of Applied Mathematics, University of California,
  Merced, CA, USA}
\author{
Alen Alexanderian\thanks{\addressncsu}
\and
Noemi Petra\thanks{\addressucm}
\and
Georg Stadler\thanks{\addressnyu} 
\and
Isaac Sunseri\footnotemark[2]
}
\title{Optimal design of large-scale Bayesian linear inverse problems
  under reducible model uncertainty: good to know what you don't
  know\thanks{Submitted to the editors \today.  \funding{Supported in
      part by US National Science Foundation DMS \#1723211, 
      \#1654311, and \#1745654.}}}
\begin{document}
\maketitle

\begin{abstract}
We consider optimal design of infinite-dimensional Bayesian linear
inverse problems governed by partial differential equations that contain
secondary \emph{reducible} model uncertainties, in addition to the
uncertainty in the inversion parameters. By reducible uncertainties we
refer to parametric uncertainties that can be reduced through
parameter inference.  We seek experimental designs that minimize the
posterior uncertainty in the primary parameters, while accounting for
the uncertainty in secondary parameters.  We accomplish this by
deriving a marginalized A-optimality criterion and developing an
efficient computational approach for its optimization.  We illustrate
our approach for estimating an uncertain time-dependent
source in a contaminant transport model with an uncertain initial
state as secondary uncertainty.  Our results indicate that accounting
for additional model uncertainty in the experimental design process is
crucial.
\end{abstract}

\begin{keywords}
Optimal experimental design, Bayesian inference, inverse problems,
model uncertainty, sensor placement, sparsified designs.

\end{keywords}

\begin{AMS}
65C60,  %
62K05,  %
62F15,  %
35R30.  %
\end{AMS}

\section{Introduction}
An inverse problem uses measurement data and a 
mathematical model to estimate a set of 
uncertain model parameters.
An experimental design specifies the strategy for collecting measurement data. 
For example, 
in inverse problems where measurement data are collected using sensors, 
an experimental design specifies the placement of
the sensors. This is the setting considered in the present
work.
Optimal experimental design (OED)~\cite{AtkinsonDonev92,Ucinski05} refers to the
task of determining an experimental setup such that the measurements
are most informative about the underlying parameters.  This is
particularly important in situations where experiments are costly or
time-consuming, and thus only a small number of measurements can be
collected.  
In addition to the parameters estimated by the inverse problem, the governing
mathematical models often involve simplifications, approximations, or modeling 
assumptions, resulting
in additional uncertainty. 
These additional
uncertainties must be taken into account in the experimental design process;
failing to do so could result in suboptimal
designs. 

We distinguish between two types of uncertainties: \emph{reducible}
and \emph{irreducible}~\cite{Smith13}. Reducible uncertainties, also
referred to as epistemic uncertainties, are those that can be reduced
through parameter inference. In contrast, irreducible uncertainties, also known as aleatoric
uncertainties, are inherent to the model and are impractical or
impossible to reduce through parameter inference. In this article, we
aim at computing optimal experimental designs in the presence of
\emph{reducible} model uncertainty.

In what follows, we consider the model
\begin{equation}\label{equ:model}
    \obs = \E(\iparm, \iparb) + \vec\eta,
\end{equation}
where $\obs$ is a vector of measurement data, $(\iparm, \iparb)$ a pair of
uncertain parameter vectors or functions, $\E$ a model that maps $(m, b)$ to
measurements, and $\vec\eta$ a random vector that models additive
measurement errors.  Herein, $\iparm$ is the parameter of primary interest,
which we seek to infer, and $\iparb$ represents additional uncertain
parameters. We assume $m$ and $b$ are elements of infinite-dimensional Hilbert
spaces.  Furthermore, we assume that the uncertainty in $\iparb$ is reducible.
Thus, we can formulate an inverse problem to estimate both $\iparm$ and
$\iparb$.  However, when designing experiments to solve the inverse problem,
our main interest is reducing the uncertainty in $\iparm$.  
We achieve this
by finding sensor placements that minimize the posterior
uncertainty in $\iparm$, while taking into account the uncertainty in $\iparb$.
This results in an OED problem in which we minimize the marginal
posterior uncertainty in $\iparm$.  

In this article, 
we focus on the case of a model that is linear in 
$\iparm$ and $\iparb$ and is of the form:
\begin{equation}\label{equ:forward_model}
    \E(\iparm, \iparb) = \F \iparm + \G\iparb. 
\end{equation}
Here, $\F$ and $\G$ are bounded linear transformations from suitably defined
Hilbert spaces to the space of measurement data. This models, for example,
a linear inverse problem with uncertain volume or boundary terms.
The mathematical foundations for Bayesian inversion and design of experiments
in this context are discussed in \cref{sec:bayes}.

Examples for secondary uncertainties
are initial conditions, boundary conditions that are introduced into a model due 
to the necessity to truncate a
computational domain, or unknown forcing or source terms in a real world
system that are only incorporated approximately in the mathematical model.
When designing experiments, failure to properly account for these secondary uncertainties 
may result in suboptimal experimental designs. For instance, not taking into account
secondary uncertainties in the mathematical model may result in sensors being
located close to uncertain sources, resulting in observations that can provide
biased information on the parameter of primary interest. If
one aims at finding designs that are optimal for both primary and secondary
uncertain parameters, the design is likely to be suboptimal for inference
of the primary parameter.

\paragraph{Related work}
In many inverse problems, one has model uncertainties in addition to
the inversion parameters.  A robust parameter inversion strategy must
account for such additional model uncertainties;
see~\cite{KaipioSomersalo05,KolehmainenTarvainenArridgeEtAl11,
  Aravkin12VanLeeuwe12,KaipioKolehmainen13,
  Nagel17,NicholsonPetraKaipio18,ConstantinescuBessacPetraEtAl20} for
a small sample of the literature addressing such issues.  This
work is about A-optimal experimental design for Bayesian linear
inverse problems governed by partial differential equations (PDEs)
with model uncertainties.  For a review of the literature on optimal
design of inverse problems governed by computationally intensive
models, we refer to~\cite{Alexanderian20}. Here, we mainly review
related work on optimal design of linear inverse problems.
The articles~\cite{HaberHoreshTenorio08,HaberMagnantLuceroEtAl12,
AlexanderianPetraStadlerEtAl14} present methods for large-scale ill-posed
linear inverse problems.  Specifically, the present article builds on
\cite{AlexanderianPetraStadlerEtAl14}, which focuses on A-optimal
experimental design of infinite-dimensional Bayesian linear inverse problems.

Recent work 
also considers A-optimal design of infinite-dimensional Bayesian linear inverse
problems with model uncertainties
\cite{KovalAlexanderianStadler20}. The key difference to
the present work is
that \cite{KovalAlexanderianStadler20}  considers
OED for inverse problems governed by models with \emph{irreducible} uncertainties and
formulates the OED problem as one of optimization under uncertainty.  In
contrast, in this work we consider OED under \emph{reducible}
model uncertainties and propose a formulation that
aims at minimizing the marginal posterior variance of 
the primary parameters. 
By combining primary and secondary uncertainties, the problem considered in
this work can formally be written as goal-oriented OED problem, as studied in
\cite{AttiaAlexanderianSaibaba18}.  However, taking a model uncertainty
perspective and considering infinite-dimensional primary and secondary
uncertain parameters require a tailored approach that distinguishes primary
and secondary uncertainties.

Other related efforts include~\cite{FohringHaber16,RuthottoChungChung18,
HermanAlexanderianSaibaba20}.  
In
\cite{FohringHaber16}, the authors present an adaptive A-optimal design
strategy for linear dynamical systems.
OED for linear inverse problems with linear equality and inequality
constraints is
addressed in \cite{RuthottoChungChung18}. This results in OED
with an effectively nonlinear inverse problem for which the  authors
propose an approach based on Bayes risk minimization.
In \cite{HermanAlexanderianSaibaba20}, the authors present an approach for
A-optimal design of infinite-dimensional Bayesian linear inverse problems using
ideas from randomized subspace iteration and reweighted $\ell_1$-minimization.

\paragraph{Contributions} This article makes the following contributions to 
the state-of-the-art in OED for 
large-scale linear inverse problems.
(i) We provide a mathematical formulation of OED under reducible model 
uncertainty and show how the OED problem can be reformulated to take advantage
of the often low dimensionality of the measurement space (see
\cref{sec:marginalized_Aoptimality}); in particular, our formulation eliminates
the need for trace estimation in the discretized parameter space. (ii) We
develop a scalable computational framework for solving the class of OED problems
under study (see \cref{sec:method}). Specifically, the computational 
complexity of   
our methods, in terms of the number of
PDE solves, does not grow with the dimension of the discretized primary and 
secondary parameters. 
(iii) We present illustrative
numerical results, in context of a contaminant transport inverse problem (see
\cref{sec:model} and \cref{sec:numerics}) where we seek to estimate an unknown
source term, but have additional uncertainty in the initial state.  Our
numerical experiments examine different aspects of our proposed framework, and
elucidate the importance of incorporating additional model uncertainties in the
OED problem.

\section{Bayesian inverse problems governed by models
with reducible uncertainties}\label{sec:bayes}
After introducing basic notation in \cref{sec:notation}, 
we present preliminaries regarding
Gaussian measures on Hilbert spaces in \cref{sec:marginals}. 
Next, we outline
the setup of Bayesian linear inverse problems with additional reducible model
uncertainties in infinite-dimensions (\cref{sec:bayes_setup}) as well as in
discretized form (\cref{sec:discretize}). We discuss 
basics on optimal design of such inverse problems in \cref{sec:OED_basic}.

\subsection{Notation}\label{sec:notation}
Herein we consider a probability space
$(\Omega,\mathfrak{A},\mathbb{P})$, where $\Omega$ is a sample space,
$\mathfrak{A}$ a sigma-algebra on $\Omega$, and $\mathbb{P}$ is a probability
measure.  Given a Hilbert space $\mathscr{X}$, we denote by
$\borel(\mathscr{X})$ the Borel sigma-algebra on $\mathscr{X}$. 
A Gaussian measure on $(\mathscr{X}, \borel(\mathscr{X}))$, with 
mean $\bar z \in \mathscr{X}$ and covariance operator $\mathcal{C}:\mathscr{X} \to \mathscr{X}$, 
is denoted by $\GM{\bar z}{\mathcal{C}}$.
We also recall that for a random variable $Z:(\Omega, \mathfrak{A}, \mathbb{P}) \to 
(\mathscr{X}, \borel(\mathscr{X}))$, its law is a Borel measure $\mathcal{L}_Z$ 
on $\mathscr{Y}$, that satisfies $\mathcal{L}_Z(A) = \mathbb{P}(Z \in A)$ for every
$A \in \borel(\mathscr{Y})$~\cite{Williams91}.
Also, for
a linear transformation $T:\mathscr{X} \to \mathscr{Y}$, where
$\mathscr{Y}$ is another Hilbert space, we denote the adjoint 
by $T^*$. 

\subsection{Marginals of Gaussian measures}\label{sec:marginals}
Here we discuss some preliminaries regarding Gaussian measures and 
Gaussian random variables taking
values in Hilbert spaces.  First we record the following known result about the
law of a linear transformation of a Hilbert space-valued Gaussian random
variable, which we prove for completeness.

\begin{lemma}\label{lem:Gaussian_LT}
Let $\mathscr{X}$ and
$\mathscr{Y}$ be infinite-dimensional Hilbert spaces. Suppose $Z:\Omega \to \mathscr{X}$ 
is a Gaussian random variable with law $\mu = \GM{\bar z}{\mathcal{C}}$. Consider the
random variable $Y = T Z$, where $T:\mathscr{X} \to \mathscr{Y}$ is a bounded linear
transformation. Then, $Y:(\Omega,\mathfrak{A}, \mathbb{P}) 
\to (\mathscr{Y}, \borel(\mathscr{Y}))$ is a Gaussian random variable with law $\nu = \GM{T \bar z}{T \mathcal{C} T^*}$.
\end{lemma}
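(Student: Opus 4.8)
The plan is to identify the law $\nu$ of $Y = TZ$ via its characteristic functional (Fourier transform), using the two standard facts underlying the theory of Gaussian measures on separable Hilbert spaces: that a Borel probability measure is uniquely determined by its characteristic functional, and that such a measure equals $\GM{\bar z}{\mathcal{C}}$ precisely when its characteristic functional has the form $\widehat{\mu}(\xi) = \exp\!\big(i\langle \xi, \bar z\rangle_{\mathscr{X}} - \tfrac12 \langle \mathcal{C}\xi,\xi\rangle_{\mathscr{X}}\big)$ for all $\xi \in \mathscr{X}$ (see, e.g., Bogachev or Da Prato--Zabczyk). First I would dispose of the measurability question: since $T$ is bounded it is continuous, hence Borel measurable, so $Y = TZ$ is indeed an $(\mathfrak{A},\borel(\mathscr{Y}))$-measurable $\mathscr{Y}$-valued random variable and $\nu = \mathcal{L}_Y$ is a well-defined Borel measure on $\mathscr{Y}$.

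The core computation is then a direct manipulation of the characteristic functional using the defining property of the adjoint. For an arbitrary test element $\zeta \in \mathscr{Y}$, I would move $T$ across the pairing to get
\begin{equation*}
\widehat{\nu}(\zeta) = \mathbb{E}\!\left[e^{\,i\langle \zeta, TZ\rangle_{\mathscr{Y}}}\right] = \mathbb{E}\!\left[e^{\,i\langle T^*\zeta, Z\rangle_{\mathscr{X}}}\right] = \widehat{\mu}(T^*\zeta).
\end{equation*}
Substituting the Gaussian form of $\widehat{\mu}$ with argument $T^*\zeta$, and then applying the adjoint identities $\langle T^*\zeta,\bar z\rangle_{\mathscr{X}} = \langle \zeta, T\bar z\rangle_{\mathscr{Y}}$ and $\langle \mathcal{C}T^*\zeta, T^*\zeta\rangle_{\mathscr{X}} = \langle T\mathcal{C}T^*\zeta, \zeta\rangle_{\mathscr{Y}}$, I would arrive at
\begin{equation*}
\widehat{\nu}(\zeta) = \exp\!\left(i\langle \zeta, T\bar z\rangle_{\mathscr{Y}} - \tfrac12 \langle T\mathcal{C}T^*\zeta, \zeta\rangle_{\mathscr{Y}}\right),
\end{equation*}
which is exactly the characteristic functional of the Gaussian measure $\GM{T\bar z}{T\mathcal{C}T^*}$.

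To close the argument I would invoke uniqueness of the characteristic functional to conclude $\nu = \GM{T\bar z}{T\mathcal{C}T^*}$, after first checking that this is a legitimate Gaussian law: the operator $T\mathcal{C}T^*$ is self-adjoint and positive semidefinite, and it is trace class because $\mathcal{C}$ is trace class (as the covariance of a Hilbert-space-valued Gaussian) and the trace-class operators form a two-sided ideal under composition with the bounded operators $T$ and $T^*$; the shifted mean $T\bar z$ lies in $\mathscr{Y}$ since $\bar z \in \mathscr{X}$. The only genuinely infinite-dimensional subtleties, and hence the main points requiring care rather than the main difficulty, are precisely these two: ensuring $T\mathcal{C}T^*$ remains trace class so that the target Gaussian measure exists, and citing the correct Hilbert-space uniqueness theorem for characteristic functionals. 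Everything else reduces to the elementary adjoint manipulations above, so I expect no serious obstacle.
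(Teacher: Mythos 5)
Your proof is correct, but it takes a genuinely different route from the paper's. The paper treats the core fact --- that the pushforward of $\mu = \GM{\bar z}{\mathcal{C}}$ under a bounded linear map is $\GM{T\bar z}{T\mathcal{C}T^*}$ --- as known, citing Proposition 1.18 of Da Prato's book, and devotes its own proof entirely to the remaining bookkeeping: for every $A \in \borel(\mathscr{Y})$, $\mathcal{L}_Y(A) = \mathbb{P}(TZ \in A) = \mathbb{P}(Z \in T^{-1}(A)) = \mu(T^{-1}(A)) = \nu(A)$, so the law of $Y$ coincides with that pushforward. You instead prove the pushforward statement itself from first principles, via the characteristic-functional identity $\widehat{\nu}(\zeta) = \widehat{\mu}(T^*\zeta)$ together with uniqueness of characteristic functionals --- which is essentially the argument behind the proposition the paper cites --- while disposing of the law-identification step with the one-line observation that $T$ is continuous, hence Borel measurable. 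What the paper's route buys is brevity and delegation to the literature; what yours buys is self-containedness, and in particular it makes explicit something the citation silently absorbs: that $T\mathcal{C}T^*$ is self-adjoint, positive semidefinite, and trace class (by the ideal property of trace-class operators under composition with bounded operators), so that $\GM{T\bar z}{T\mathcal{C}T^*}$ is a legitimate Gaussian measure on $\mathscr{Y}$. One caveat applies equally to both arguments: uniqueness of characteristic functionals, like Da Prato's framework, is formulated for \emph{separable} Hilbert spaces, a hypothesis the lemma statement omits but which the paper imposes from \cref{sec:marginals} onward, so neither proof is weakened relative to the other on this point.
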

\begin{proof}
Using~\cite[Proposition 1.18]{DaPrato06}, we know that the law of the random variable
$T:(\mathscr{X}, \borel(\mathscr{X}), \mu) \to (\mathscr{Y}, \borel(\mathscr{Y}))$ 
is given by $\mu \circ T^{-1} = \GM{T\bar z}{T \mathcal{C} T^*} = \nu$. 
To complete the proof we show $\mathcal{L}_Y = \nu$. Namely, 
for every $A \in \borel(\mathscr{Y})$,
\[
    \mathcal{L}_Y(A) = \mathbb{P}( Y \in A) = \mathbb{P}(TZ \in A) 
    = \mathbb{P}(Z \in T^{-1}(A)) = \mu(T^{-1}(A)) = \nu(A).
\]
\end{proof}
Consider a 
Hilbert space $\hilb = \hilb_1 \times \hilb_2$, where $\hilb_1$ and $\hilb_2$ 
are real, separable, infinite-dimensional Hilbert spaces with inner 
products $\ipA{\,\cdot}{\cdot}$ and
$\ipB{\,\cdot}{\cdot}$, respectively. An element $z \in \hilb$ is of the 
form $z = (z_1, z_2)$ with $z_1 \in \hilb_1$ and $z_2 \in \hilb_2$,
respectively.
We assume that $\hilb$ is equipped with the
natural inner product
\[
\ipH{x}{y} = \ipA{x_1}{y_1} + \ipB{x_2}{y_2}, \quad x, y \in \hilb.
\] 
Let
$Z:(\Omega, \mathcal{F}, \mathbb{P}) \to (\hilb, \mathcal{B}(\hilb), \mu)$ be a
Gaussian random variable with law $\mu = \GM{\bar{z}}{\mathcal{C}}$.  The
marginal laws of $Z$ can be defined analogously to the finite-dimensional
setting, as shown next. This shows that the familiar
marginalization results for Gaussian random variables 
remain meaningful in infinite dimensions.

We denote realizations of $Z$ by $z = (z_1, z_2) = (\Pi_1 z, \Pi_2 z) \in \hilb$, 
where $\Pi_1$ and $\Pi_2$ denote linear projection operators onto $\hilb_1$ and
$\hilb_2$, respectively.  
The following result concerns the law of $\Pi_i Z$, $i=1, 2$, i.e., 
marginal laws of $Z$. 
\begin{lemma}\label{lem:marginal}
$Z_i = \Pi_i Z$ has a Gaussian law $\mu_i$ with mean 
$\bar{z}_i = \Pi_i \bar{z}$ and covariance operator
  $\mathcal{C}_{ii}$, which satisfies
\begin{equation}\label{equ:Cii}
\ip{\mathcal{C}_{ii} u}{v}_i = \int_{\hilb_i} 
\ip{s - \bar{z}_i}{u}_i\ip{s - \bar{z}_i}{v}_i \, \mu_i(ds), \quad i = 1, 2,\:
\text{ for all } u,v\in \hilb_i.
\end{equation}
\end{lemma}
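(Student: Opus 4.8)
My plan is to realize each marginal $Z_i$ as a bounded linear image of $Z$ and then invoke \cref{lem:Gaussian_LT}. First I would note that the coordinate projection $\Pi_i:\hilb\to\hilb_i$ is bounded and linear (indeed $\norm{\Pi_i}=1$), so its adjoint $\Pi_i^*:\hilb_i\to\hilb$ is well defined; unwinding the identity $\ipH{w}{\Pi_i^* u}=\ip{\Pi_i w}{u}_i$ for all $w\in\hilb$ and $u\in\hilb_i$ shows that $\Pi_i^*$ is the canonical embedding, i.e.\ $\Pi_1^* u=(u,0)$ and $\Pi_2^* v=(0,v)$. With this in hand, \cref{lem:Gaussian_LT} applied to $T=\Pi_i$ directly yields that $Z_i=\Pi_i Z$ is a Gaussian random variable whose law is $\mu_i=\GM{\Pi_i\bar{z}}{\Pi_i\mathcal{C}\Pi_i^*}$. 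Reading off the mean and covariance gives $\bar{z}_i=\Pi_i\bar{z}$ and $\mathcal{C}_{ii}=\Pi_i\mathcal{C}\Pi_i^*$, which establishes the first two claims.

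It then remains to verify the integral representation \cref{equ:Cii}. Here I would use that the covariance operator $\mathcal{C}$ of the Hilbert-space Gaussian measure $\mu$ is characterized by $\ipH{\mathcal{C} x}{y}=\int_{\hilb}\ipH{w-\bar{z}}{x}\,\ipH{w-\bar{z}}{y}\,\mu(dw)$ for all $x,y\in\hilb$; the integral is finite because Gaussian measures on Hilbert spaces have trace-class covariance and hence finite second moments. Taking $x=\Pi_i^* u$ and $y=\Pi_i^* v$ with $u,v\in\hilb_i$, and pushing the projections through the adjoint via $\ipH{w-\bar{z}}{\Pi_i^* u}=\ip{\Pi_i w-\bar{z}_i}{u}_i$, I obtain $\ip{\mathcal{C}_{ii}u}{v}_i=\int_{\hilb}\ip{\Pi_i w-\bar{z}_i}{u}_i\,\ip{\Pi_i w-\bar{z}_i}{v}_i\,\mu(dw)$.

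The final and only delicate step is to transport this integral over $\hilb$ to one over $\hilb_i$. Since $\mu_i=\mu\circ\Pi_i^{-1}$ is the pushforward of $\mu$ under $\Pi_i$ (the same device used in the proof of \cref{lem:Gaussian_LT}), the change-of-variables formula $\int_{\hilb}(g\circ\Pi_i)\,d\mu=\int_{\hilb_i}g\,d\mu_i$, applied with $g(s)=\ip{s-\bar{z}_i}{u}_i\,\ip{s-\bar{z}_i}{v}_i$, yields exactly \cref{equ:Cii}. I expect this measure-theoretic change of variables, together with the correct identification of $\Pi_i^*$, to be the only points requiring care; Gaussianity and the form of the mean and covariance are then immediate from \cref{lem:Gaussian_LT}. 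Alternatively, one may simply observe that the right-hand side of \cref{equ:Cii} is by definition the covariance operator of the Gaussian measure $\mu_i$, so that \cref{equ:Cii} holds automatically once $\mu_i=\GM{\bar{z}_i}{\mathcal{C}_{ii}}$ has been established.
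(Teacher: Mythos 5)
Your proposal is correct and follows essentially the same route as the paper's proof: invoke \cref{lem:Gaussian_LT} with $T=\Pi_i$ to get Gaussianity of $\Pi_i Z$ with mean $\Pi_i\bar z$ and covariance $\mathcal{C}_{ii}=\Pi_i\mathcal{C}\Pi_i^*$, then verify \cref{equ:Cii} by writing $\ip{\mathcal{C}_{ii}u}{v}_i=\ipH{\mathcal{C}\Pi_i^*u}{\Pi_i^*v}$, using the defining property of $\mathcal{C}$, and passing from an integral over $\hilb$ to one over $\hilb_i$ via the pushforward $\mu_i=\mu\circ\Pi_i^{-1}$. The only differences are cosmetic: you keep a general mean where the paper reduces to $\bar z\equiv 0$ without loss of generality, and you make explicit the change-of-variables step that the paper performs silently in its final equality.
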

\begin{proof}
By \cref{lem:Gaussian_LT}, $\Pi_i Z$ has a Gaussian law $\mu_i =
\mathcal{N}(\Pi_i \bar{z}, \mathcal{C}_{ii})$ with 
$\mathcal{C}_{ii} = \Pi_i \mathcal{C} \Pi_i^*$. 
It remains to show 
that $\mathcal{C}_{ii}$ satisifes~\cref{equ:Cii}.
Without loss of generality,
we assume $\bar{z} \equiv 0$ and show the result for $i = 1$.
By definition of the covariance operator $\mathcal{C}$ of $\mu$, we 
have
$\ipH{\mathcal{C} a}{b} = \int_\hilb \ipH{z}{a}\ipH{z}{b} \, \mu(dz)$,
for $a, b \in \hilb$.
Therefore, for arbitrary $u, v \in \hilb_1$, we have
\[
    \ip{\mathcal{C}_{11}u}{v}_1
   = \ipH{\mathcal{C}\Pi_1^* u}{\Pi_1^* v}
   = \int_{\hilb} \ipH{z}{(u,0)}\ipH{z}{(v,0)} \mu(dz)
   = \int_{\hilb_1} \ip{s}{u}_1 \ip{s}{v}_1 \mu_1(ds).
\]
\end{proof}

In the present work, $\hilb_1 = L^2(\T)$ and
$\hilb_2 = L^2(\D)$ with $\T$ and $\D$ bounded open sets in $\R^{d_i}$
with $d_i \in \{1, 2, 3\}$, for $i = 1, 2$. 
In this case, realizations of $\Pi_1 Z$ and $\Pi_2 Z$ are square-integrable 
functions on $\T$ and $\D$, respectively.
Thus, we can also view $\Pi_i Z$ as a random field. Consider, e.g.,
$Z_1 = \Pi_1 Z$. This marginalized random field
has mean $\bar{z}_1(x)$ and the following covariance function (kernel):
\[
    c_{11}(x, y) \defeq \int_\Omega (Z_1(x, \omega) - \bar{z}_1(x))
                               (Z_1(y, \omega)-\bar{z}_1(y))\,\mathbb{P}(d\omega).
\]
As expected, the (marginal) covariance operator
$\mathcal{C}_{11}$ can be written as an integral 
operator with kernel $c_{11}$.
To show this, we use~\cref{equ:Cii} and again, for simplicity,
assume $\bar{z} \equiv 0$. 
Note that
\begin{align*}
\ip{\mathcal{C}_{11} u}{v}_1 =
\int_{\hilb_1} \ip{s}{u}_1 \ip{s}{v}_1 \mu_1(ds)
&= \int_\Omega \ip{Z_1(\omega)}{u}_1 \ip{Z_1(\omega)}{v}_1 \, \mathbb{P}(d\omega)
\\
&= 
\int_\Omega \int_\T \int_\T
  Z_1(x,\omega)Z_1(y,\omega) 
u(x) v(y)\, dx \,dy\,\mathbb{P}(d\omega)
\\
&=
\int_\T 
\left[\int_\T
\left(\int_\Omega 
Z_1(x,\omega)Z_1(y,\omega) \,\mathbb{P}(d\omega)\right)
v(y) \, dy \right]
\,  u(x) \,dx
\\
&= \int_\T 
\left[\int_\T
c_{11}(x, y) v(y) \, dy
\right]
u(x) \, dx,
\end{align*}
where we used Fubini's theorem to change the order of the integrals.
From this, we deduce 
\[
[\mathcal{C}_{11} v](\cdot) 
= \int_\T c_{11}(\cdot, y) v(y) \, dy.
\]

In finite dimensions, we recover the following well-known~\cite{Tong12} result,
which we prove here for completeness:
\begin{lemma}\label{lem:marginal_fd}
Consider a Gaussian random vector
\[
\vec{Z} = \begin{bmatrix} \vec{Z}_1 \\ \vec{Z}_2 
\end{bmatrix} \sim \mathcal{N}(\bar{\vec{z}}, \mat{C}) = 
\mathcal{N}\left( \begin{bmatrix} \bar{\vec{z}}_1 \\ \bar{\vec{z}}_2 \end{bmatrix},
\begin{bmatrix} \mat{C}_{11} & \mat{C}_{12} 
\\ \mat{C}_{21} & \mat{C}_{22} \end{bmatrix} \right),
\]
where $\vec{Z}_1$ and $\vec{Z}_2$ denote subsets of entries of $\vec{Z}$ and the
mean and covariance matrix are partitioned consistent with partitioning of $\vec{Z}$.
Then, the marginals of $\vec{Z}$ are 
Gaussian, with $\vec{Z}_1 \sim \mathcal{N}(\bar{\vec{z}}_1, \mat{C}_{11})$ 
and $\vec{Z}_2 \sim
\mathcal{N}(\bar{\vec{z}}_2, \mat{C}_{22})$. 
\end{lemma}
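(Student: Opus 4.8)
The plan is to realize each marginal as the image of $\vec{Z}$ under a coordinate-selection matrix and then invoke the linear-transformation result already established. Writing $n_1$ and $n_2$ for the sizes of the blocks $\vec{Z}_1$ and $\vec{Z}_2$ and $n = n_1 + n_2$ for the dimension of $\vec{Z}$, I would introduce the selection matrices $\mat{P}_1 = [\,\mat{I}\;\;\mat{0}\,]$ and $\mat{P}_2 = [\,\mat{0}\;\;\mat{I}\,]$, of sizes $n_1\times n$ and $n_2\times n$, so that $\vec{Z}_1 = \mat{P}_1\vec{Z}$ and $\vec{Z}_2 = \mat{P}_2\vec{Z}$. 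After reordering coordinates, there is no loss of generality in assuming the block structure displayed in the statement.

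First I would apply the finite-dimensional analogue of \cref{lem:Gaussian_LT} with $T = \mat{P}_i$: since $\mat{P}_i:\R^n \to \R^{n_i}$ is a bounded linear map and $\vec{Z}\sim\GM{\bar{\vec z}}{\mat{C}}$, the image $\vec{Z}_i = \mat{P}_i\vec{Z}$ is Gaussian with mean $\mat{P}_i\bar{\vec z}$ and covariance $\mat{P}_i\mat{C}\mat{P}_i^\tran$. It then remains only to evaluate these two quantities by block multiplication. For $i=1$, one has $\mat{P}_1\bar{\vec z} = \bar{\vec z}_1$ and
\[
\mat{P}_1\mat{C}\mat{P}_1^\tran
= [\,\mat{I}\;\;\mat{0}\,]
\begin{bmatrix} \mat{C}_{11} & \mat{C}_{12} \\ \mat{C}_{21} & \mat{C}_{22}\end{bmatrix}
\begin{bmatrix}\mat{I}\\ \mat{0}\end{bmatrix}
= \mat{C}_{11},
\]
so $\vec{Z}_1\sim\GM{\bar{\vec z}_1}{\mat{C}_{11}}$; the case $i=2$ is identical with $\mat{P}_2$, yielding $\vec{Z}_2\sim\GM{\bar{\vec z}_2}{\mat{C}_{22}}$.

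The only point that needs care --- and it is a small one --- is that \cref{lem:Gaussian_LT} is stated for infinite-dimensional Hilbert spaces, whereas $\mat{P}_i$ here maps between finite-dimensional spaces. I would note that its proof carries over verbatim, since it uses only that the pushforward of a Gaussian measure under a bounded linear map is again Gaussian with the transformed mean and covariance, a fact equally valid in finite dimensions (indeed $\R^n$ is itself a Hilbert space). Should a fully self-contained argument be preferred, I would instead compute characteristic functions: from $\phi_{\vec Z}(\vec t) = \exp\!\big(i\,\vec t^\tran \bar{\vec z} - \tfrac12 \vec t^\tran \mat{C}\,\vec t\big)$, restricting to $\vec t = (\vec t_1,\vec 0)$ gives $\phi_{\vec Z_1}(\vec t_1) = \exp\!\big(i\,\vec t_1^\tran \bar{\vec z}_1 - \tfrac12 \vec t_1^\tran \mat{C}_{11}\vec t_1\big)$, the characteristic function of $\GM{\bar{\vec z}_1}{\mat{C}_{11}}$; uniqueness of characteristic functions then concludes the argument, and symmetrically for $\vec Z_2$. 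Either way the computation is routine, and the substance of the proof is simply the identification of the selection map with the appropriate block extraction.
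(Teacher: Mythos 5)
Your proposal is correct and follows essentially the same route as the paper: the paper's proof also writes $\vec{Z}_1 = \mat{P}\vec{Z}$ with $\mat{P} = [\,\mat{I}\;\;\mat{0}\,]$ and invokes the linear-transformation result (\cref{lem:Gaussian_LT}) to conclude $\vec{Z}_1 \sim \GM{\mat{P}\bar{\vec z}}{\mat{P}\mat{C}\mat{P}^\tran} = \GM{\bar{\vec z}_1}{\mat{C}_{11}}$, treating $\vec{Z}_2$ analogously. Your added remarks on the finite- versus infinite-dimensional applicability of that lemma and the characteristic-function alternative are sound but not needed for the argument.
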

\begin{proof}
Note that $\vec{Z}_1 = \mat{P} \vec{Z}$ with 
$\mat{P} = \begin{bmatrix}\mat{I} & \mat{0}\end{bmatrix}$,
where $\mat{I}$ is the identity matrix of dimension equal to that of 
$\vec{Z}_1$ and $\vec{0}$ the zero matrix of the same size as $\vec{Z}_2$.
Thus, $\vec{Z}_1 \sim  \GM{\mat{P}\bar{\vec{z}}}{\mat{P}\mat{C}\mat{P}^\tran} 
= \GM{\bar{\vec{z}}_1}{\mat{C}_{11}}$. Showing the statement about the law of
$\vec{Z}_2$ is analogous. 
\end{proof}

\subsection{Bayesian inverse problem setup}\label{sec:bayes_setup}
We consider a Bayesian linear inverse problem for $\theta = (\iparm, \iparb)
\in \hilb = \hilb_1 \times \hilb_2$ and where the forward model is of 
the form \cref{equ:forward_model}.
We assume Gaussian priors for the primary and secondary parameters,
which we denote by $m$ and $b$, respectively, 
and for simplicity of the presentation assume no prior correlation between  
$m$ and $b$.
The presented framework can 
be modified to allow for prior correlations
between $m$ and $b$. Thus, 
the prior law of $(m, b)$ is the product measure
$\prior = \priorm \otimes \priorb$, with $\priorm$ and $\priorb$ 
each Gaussian measures on $\hilb_1$ and $\hilb_2$, i.e.,
$
   \priorm = \GM{\iparprm}{\iprcovm} 
$
and
$
   \priorb = \GM{\iparprb}{\iprcovb}. 
$ 
Note that $\prior = \GM{\iparpr}{\iprcov}$
with $\iparpr = (\iparprm, \iparprb)$ and
$\iprcov = \iprcovm \times \iprcovb$, where
\[
     (\iprcovm \times \iprcovb) (u_1, u_2) = (\iprcovm u_1, \iprcovb u_2), \quad 
     (u_1, u_2) \in \hilb.
\]
 
The inverse problem under study considers inference of 
$\iparm$ and $\iparb$ using measurement data $\obs \in \R^\Nd$ and the model
\begin{equation}\label{equ:model_lin}
\obs = \F \iparm + \G\iparb + \vec\eta.
\end{equation}
The measurement noise vector $\vec\eta$ is assumed to be independent of
$(\iparm, \iparb)$, and 
we assume a Gaussian noise model, $\vec\eta \sim \GM{\vec{0}}{\ncov}$.
Under these assumptions, the posterior is the Gaussian measure 
$\postm = \GM{\iparpost}{\ipostcov}$ with~\cite{Stuart10}
\begin{equation}\label{equ:post_infdim}
    \ipostcov^{-1} = \E^\adj\ncov^{-1}\E + \iprcov^{-1}, 
    \quad 
    \iparpost = \ipostcov(\E^\adj \ncov^{-1} \obs + \iprcov^{-1}\iparpr).
\end{equation} 
Note that $\E^*$ denotes the adjoint of the linear transformation $\E$.
Specifically, $\E^*$ satisfies $\E^*\obs = (\F^*\obs, \G^*\obs) \in \hilb$, for 
$\obs \in  \R^\Nd$.

\subsection{The discretized problem}\label{sec:discretize}
Let $\dparm$ and $\dparb$ be discretized versions of 
$\iparm$ and $\iparb$. Recall that we consider a parameter space
$\hilb$ of the form $\hilb = L^2(\T) \times L^2(\D)$.
The discretized parameter space is $\hilbn = \R^{\Nm} \times \R^{\Nb} 
\cong \R^{n}$, where $\Nm$ and $\Nb$ are the dimensions 
of the discretized parameters $\dparm$ and $\dparb$, respectively, 
and $n = \Nm + \Nb$. An element $\vec{u} \in \hilbn$, 
$\vec{u} = (\vec{u}_1, \vec{u}_2)$ with  
$\vec{u}_1 \in \R^{\Nm}$ and $\vec{u}_2 \in \R^{\Nb}$, can be represented as
$\vec{u} = 
[\begin{matrix} \vec{u}_1^\tran & \vec{u}_2^\tran\end{matrix}]^\tran$.
The discretized parameter space is endowed with the inner product
\[
\mip{\vec{u}}{\vec{v}} = \vec{u}_1^\tran \mat{M}_1 \vec{v}_1 
+ \vec{u}_2^\tran \mat{M}_2 \vec{v}_2
= \vec{u}^\tran \MM \vec{v}, \quad \vec{u}, \vec{v} \in \hilbn, 
\]
with 
$\MM = \begin{bsmallmatrix} \mat{M}_1 & \mat{0}\\ \mat{0} & \mat{M}_2\end{bsmallmatrix}$, 
and where  
the ``weight'' matrices $\mat{M}_1$ and $\mat{M}_2$ 
are defined based on the method used to discretize the
$L^2$-inner products on $L^2(\T)$ and $L^2(\D)$, respectively; 
see~\cref{sec:numerics} for examples. 
The discretized forward operator is defined by 
\[
   \EE \dpar = \begin{bmatrix} \mat{F} & \mat{G} \end{bmatrix}
   \begin{bmatrix} \dparm \\  \dparb \end{bmatrix} =  
\mat{F} \dparm + \mat{G} \dparb,
\]
where $\mat{F}$ and $\mat{G}$ are discretizations of $\F$ and $\G$ in
\cref{equ:model_lin}. The respective marginal priors are
$
\GM{\dparprm}{\prcovm}
$
and
$
\GM{\dparprb}{\prcovb},
$
and the prior covariance is 
$\prcov = \begin{bsmallmatrix} \prcovm & \mat{0} \\ \mat{0} & \prcovb
\end{bsmallmatrix}$. 
Using~\cref{equ:post_infdim}, the posterior covariance operator satisfies
\[
\postcov^{-1} = \begin{bmatrix}
    \prcovm^{-1} + \mat{F}^\adj \ncov^{-1} \mat{F} & \mat{F}^\adj \ncov^{-1} \mat{G} \\
    \mat{G}^\adj \ncov^{-1} \mat{F} & \prcovb^{-1} + \mat{G}^\adj \ncov^{-1} \mat{G}
\end{bmatrix}.
\]
Computing the inverse of the block matrix on the right is facilitated by the
well-known formula for the inverse of a such matrices~\cite[Theorem 2.1(ii)]{LuShiou02}.
Specifically, we can show that
the covariance operator of the 
marginal posterior law of $\dparm$ is given by 
\begin{equation}\label{equ:marginalized_post_cov}
\postcovm \!=\! 
  \big( \prcovm^{-1} +  \mat{F}^\adj \ncov^{-1} \mat{F}
         - \mat{F}^\adj \ncov^{-1} \mat{G} ( \prcovb^{-1} + \mat{G}^\adj \ncov^{-1} \mat{G})^{-1}
\mat{G}^\adj \ncov^{-1} \mat{F}\big)^{-1}.
\end{equation}
Note also that for 
\[
\mat{F}: (\R^{\Nm}, \ipg{\cdot}{\cdot}{\mat{M}_1}) \to (\R^\Nd, \ipg{\cdot}{\cdot}{\R^{\Nd}})
\quad \text{and}\quad
\mat{G}: (\R^{\Nb}, \ipg{\cdot}{\cdot}{\mat{M}_2}) \to (\R^\Nd, \ipg{\cdot}{\cdot}{\R^{\Nd}}),
\]
where $\ipg{\cdot}{\cdot}{\R^{\Nd}}$ denotes the Euclidean inner product on 
$\R^\Nd$, 
the respective adjoint operators are defined by 
(cf.~e.g.,~\cite{Bui-ThanhGhattasMartinEtAl13})
\begin{align}\label{eq:fd-adj-operators}
   \mat{F}^\adj = \mat{M}_1^{-1} \mat{F}^\tran \quad\text{and}\quad
   \mat{G}^\adj = \mat{M}_2^{-1} \mat{G}^\tran.
\end{align}

The optimal design approach we follow consists of minimizing the average posterior variance in 
$\dparm$ by minimizing the trace of the marginal posterior covariance operator
defined in \cref{equ:marginalized_post_cov}. We call the resulting OED criterion
the \emph{marginalized A-optimality criterion}.
In~\cref{sec:marginalized_Aoptimality}, we derive an alternative
expression for the marginal posterior covariance operator, which is
useful
in applications which only allow low or moderate dimensional measurements.

\subsection{Optimal experimental design}\label{sec:OED_basic}
We formulate the sensor placement problem using  
the approach in~\cite{HaberMagnantLuceroEtAl12,
AlexanderianPetraStadlerEtAl14}. 
We assume 
$\vec{x}_i$, $i = 1, \ldots, \Nd$, represent a fixed 
set of candidate sensor locations. The goal is to select an optimal
subset of these locations.
We assign a
non-negative weight $w_i \in \R$ to each $\vec{x}_i$, 
$i = 1, \ldots, \Nd$.
An experimental design is specified by the vector
$\vec{w} = [w_1, w_2, \ldots, w_\Nd]^\tran$.
As detailed in \cite{HaberMagnantLuceroEtAl12,AlexanderianPetraStadlerEtAl14},
binary weight vectors are desirable to decide whether or not to place
a sensor in each of the candidate locations.
However, solving an 
OED problem with binary weights is challenging due to its 
combinatorial complexity. Thus, as 
in~\cite{AlexanderianPetraStadlerEtAl14}, we
relax the problem by considering weights $w_i \in [0,1]$, $i  = 1,
\ldots, \Nd$. Binary weights are obtained using sparsifying
penalty functions, as discussed further in \cref{subsec:sparsity}. 
An alternative approach to obtaining binary weights, which can 
be suitable for some problems, is a greedy strategy; see \cref{subsec:greedy}.

The vector $\vec{w}$ is introduced into the Bayesian inverse problem through
the data likelihood~\cite{AlexanderianPetraStadlerEtAl14}.  
We 
assume uncorrelated measurements; i.e., the noise covariance is diagonal,
$\ncov = \mathrm{diag}(\sigma^2_1, \sigma^2_2, \ldots, \sigma^2_\Nd)$, with
$\sigma^2_j$ the noise level at the $j$th sensor. For 
$\vec{w} \in \R^\Nd$, we define the diagonal weight 
matrix $\W = \mathrm{diag}(w_1, w_2, \ldots, w_\Nd)$ and the 
matrix $\Wn$ as follows:
\begin{align}\label{eq:Wsigma}
\Wn \defeq 
\mathrm{diag}\Big( \frac{w_1}{\sigma^2_1}, \frac{w_2}{\sigma^2_2}, \ldots, \frac{w_{n_d}}{\sigma^2_{n_d}}\Big)=
\sum_{j = 1}^\Nd w_j \sigma_j^{-2} \vec{e}_j \vec{e}_j^\top,
\end{align}
where $\vec{e}_j$ is the $j$th coordinate vector in $\R^\Nd$.
The $\vec{w}$-dependent MAP estimator and posterior covariance operator are then 
given by~\cite{AlexanderianPetraStadlerEtAl14}
\begin{equation}\label{equ:weighted_posterior}
\dparmap(\vec{w}) = \postcov(\vec{w})
\big( \mat{E}^* \Wn\obs + \prcov^{-1} \dparpr\big)
\quad \text{and}\quad
\postcov(\vec{w}) = 
(\mat{E}^\adj \Wn \mat{E} + \prcov^{-1})^{-1}.
\end{equation} 

Optimal experimental design (OED) is the problem of finding a design that, within
constraints on the number of sensors allowed, minimizes the posterior
uncertainty in the estimated parameters. This is done by minimizing certain
design criteria that quantify the posterior uncertainty~\cite{Ucinski05,ChalonerVerdinelli95}. 
In this 
article, we use the A-optimal design criterion which is given by
$\trace\big[\postcov(\vec{w})\big]$; this criterion quantifies the average
posterior variance of the parameter $\dpar$. Using this approach for
\cref{equ:weighted_posterior}, the OED objective
is given by the sum of the average posterior variance
of the primary and secondary parameters.  The primary parameter being the main
focus of parameter estimation, we seek sensor placements that minimize the
uncertainty in the primary parameter, while being aware of the uncertainty in
the secondary parameters.  This is done by 
finding designs that minimize the average posterior
variance of the primary parameters, quantified according to the corresponding
marginalized posterior distribution. We call such
designs marginalized A-optimal designs, which are the subject of
\cref{sec:marginalized_Aoptimality}.

Note that ignoring the uncertainty in the secondary parameter  
and fixing $\vec b$ to some nominal value $\vec b_0$, results in  
the affine forward model
$    \mat{E}_{\scriptscriptstyle0} \vec{m} = \mat{F}\vec{m} + \mat{G}\vec{b}_0$.
In this case, the posterior law of $\dparm$ is
$\GM{\dparmapm}{\postcovm}$ with
\begin{equation}\label{equ:weighted_posterior_fixedb}
\begin{aligned}
\dparmapm(\vec{w}) &= \postcovm(\vec{w})
\big( \mat{F}^* \Wn(\obs - \mat{G}\vec{b}_0) + \prcovm^{-1} \dparprm\big)
\quad \text{and}\\
\postcovm(\vec{w}) &=
(\mat{F}^\adj \Wn \mat{F} + \prcovm^{-1})^{-1},
\end{aligned}
\end{equation}
and an A-optimal design $\vec w$ is
one that minimizes the classical A-optimality criterion
\begin{equation}\label{equ:classical_Aoptimal}
    \psi(\vec w) \defeq \trace\big[(\mat{F}^*\Wn\mat{F} +
       \prcovm^{-1})^{-1}\big].
\end{equation}
Notice that the optimal design does not depend on the choice of $\vec b_0$.
More importantly, such an optimal design is completely unaware of 
the uncertainty in $\vec b$.

\section{Marginalized Bayesian A-optimality}\label{sec:marginalized_Aoptimality}
In this section, we present our formulation of the marginalized A-optimality
criterion.  We first
derive a reformulation of the marginalized posterior covariance that
facilitates an efficient computational procedure for computing marginalized
A-optimal designs; see \cref{sec:reformulated_posterior}. 
Then, we present the definition of the marginalized A-optimality criterion,
in~\cref{sec:criterion},
and prove its convexity.  Finally, the formulation of the optimization problem for
finding  marginalized A-optimal designs is discussed in~\cref{sec:optimization}.

\subsection{Alternative form of the posterior}
\label{sec:reformulated_posterior}
Computing optimal designs based on the marginalized posterior covariance
operator \cref{equ:marginalized_post_cov} entails traces of
operators defined on the discretized parameter spaces.  The corresponding
expressions also include inverses of operators of dimensions $\Nm$ and $\Nb$;
see~\cref{equ:marginalized_post_cov}.
The discretized parameter dimensions are typically large and depend
on the computational grids used for discretization.  In
many large scale inverse problems, the dimension $\Nd$ of the 
measurement vector $\obs$ is considerably smaller than the dimension of the
discretized uncertain parameters. Also, in our approach, this measurement
dimension is fixed a priori.  Here we derive an alternative expression for the
posterior covariance operator~\cref{equ:weighted_posterior} that facilitates
exploiting this problem structure.  In particular, this allows reformulating
the marginalized A-optimality criterion in terms of an operator defined on the
measurement space, which can then be computed directly (see \cref{sec:method}).
This is in contrast to previous works such
as~\cite{HaberHoreshTenorio08,HaberMagnantLuceroEtAl12,
AlexanderianPetraStadlerEtAl14,FohringHaber16,HermanAlexanderianSaibaba20} that
use randomized trace estimation (in the discretized parameter space) to compute
the OED objective.

\begin{theorem}\label{thm:posterior_cov}
The following relation holds.
\begin{equation}\label{equ:posterior_cov}
(\mat{E}^\adj \Wn \mat{E} + \prcov^{-1})^{-1}  = \prcov - \prcov\mat{E}^\adj(\mat{I} + \Wn \mat{E} \prcov\mat{E}^\adj)^{-1}\Wn \mat{E} \prcov.
\end{equation}
\end{theorem}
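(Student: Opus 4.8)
The plan is to recognize \cref{equ:posterior_cov} as a Sherman--Morrison--Woodbury identity for the posterior precision, with $\prcov^{-1}$ playing the role of the base operator and $\mat{E}^\adj \Wn \mat{E}$ the measurement-space-factored update. The textbook Woodbury formula is not directly applicable here, however, because it would require inverting $\Wn$, and in our setting the design weights $w_j$ may vanish, so $\Wn$ is in general singular. To sidestep this, I would not invoke the formula but instead verify the claimed identity directly: abbreviating $\mathcal{A} := \mat{E}^\adj \Wn \mat{E} + \prcov^{-1}$, it suffices to right-multiply the proposed expression by $\mathcal{A}$ and show that the product equals $\mat{I}$. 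Since all operators are finite-dimensional (they act on the discretized parameter space $\hilbn \cong \R^{n}$) and square, exhibiting a right inverse of the invertible operator $\mathcal{A}$ establishes the stated inverse.

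Before the computation I would record that all inverses in sight are well defined: $\prcov$ is invertible, $\mathcal{A}$ is invertible as the sum of a positive semidefinite and a positive definite operator, and $\mat{I} + \Wn \mat{E}\prcov\mat{E}^\adj$ is invertible because $\Wn$ and $\mat{E}\prcov\mat{E}^\adj$ are positive semidefinite, so their product has nonnegative spectrum. I would also note that the manipulations are purely algebraic and independent of the particular weighted inner products used to define the adjoints in \cref{eq:fd-adj-operators}. Writing $K := \mat{E}\prcov\mat{E}^\adj$ and $S := (\mat{I} + \Wn K)^{-1}$, expanding the product of the right-hand side of \cref{equ:posterior_cov} with $\mathcal{A}$ yields a leading term $\prcov \mat{E}^\adj \Wn \mat{E} + \mat{I}$ together with a correction $-\prcov\mat{E}^\adj\, S\,\Wn\,(K\Wn + \mat{I})\,\mat{E}$. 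The crux is the ``push-through'' identity
\[
S\,\Wn\,(K\Wn + \mat{I}) = \Wn ,
\]
which follows at once from $S^{-1}\Wn = (\mat{I}+\Wn K)\Wn = \Wn + \Wn K\Wn = \Wn(K\Wn + \mat{I})$ upon left-multiplying by $S$.

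Substituting this identity collapses the correction term to $-\prcov\mat{E}^\adj \Wn \mat{E}$, which cancels against the corresponding part of the leading term and leaves exactly $\mat{I}$, completing the verification. The main obstacle is precisely the possible singularity of $\Wn$, which forbids the standard Woodbury citation; the direct verification together with the elementary push-through identity circumvents this cleanly, with no regularization or limiting argument required.
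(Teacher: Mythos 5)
Your proposal is correct and takes essentially the same route as the paper: a direct algebraic verification of the identity (deliberately bypassing the Sherman--Morrison--Woodbury formula because $\Wn$ may be singular), preceded by the observation that $\Wn \mat{E}\prcov\mat{E}^\adj$, being a product of positive semidefinite matrices, has nonnegative spectrum, so that $\mat{I} + \Wn\mat{E}\prcov\mat{E}^\adj$ is invertible. The only cosmetic differences are that you check $R\mathcal{A} = \mat{I}$ rather than $\mathcal{A}R = \mat{I}$, which is why you need the push-through identity $\bigl(\mat{I}+\Wn K\bigr)^{-1}\Wn\bigl(K\Wn+\mat{I}\bigr) = \Wn$ while the paper's grouping cancels $\bigl(\mat{I}+\Wn K\bigr)\bigl(\mat{I}+\Wn K\bigr)^{-1}$ directly, and that the paper spells out (via $\prcov = \MM^{-1}\prcov^\tran\MM$ and $\mat{E}^\adj = \MM^{-1}\mat{E}^\tran$) why $\mat{E}\prcov\mat{E}^\adj$ is symmetric positive semidefinite and why the product of PSD matrices has nonnegative eigenvalues, two facts you assert without proof.
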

\begin{proof}
First, we need to show that $\mat{I} + \Wn \mat{E} \prcov\mat{E}^\adj$ is
invertible. To do this, we show that $\Wn \mat{E} \prcov\mat{E}^\adj$ has
non-negative eigenvalues.
Note that $\prcov = \prcov^\adj = \MM^{-1} \prcov^\tran \MM$.
Moreover, we have that $\mat{E}^\adj = \MM^{-1}
\mat{E}^\tran$. Thus, we have  $(\mat{E} \prcov\mat{E}^\adj)^\tran =
(\mat{E}^\adj)^\tran \prcov^\tran \mat{E}^\tran = \mat{E} \MM^{-1} \prcov^\tran
\MM \mat{E}^\adj = \mat{E} \prcov \mat{E}^\adj$.  That is, $\mat{E}
\prcov\mat{E}^\adj$ is symmetric; it is also clearly positive semidefinite.  

To show that $\Wn \mat{E} \prcov\mat{E}^\adj$ has non-negative
eigenvalues, we recall a basic result from linear algebra: if
$\mat{A}$ and $\mat{B}$ are two square matrices, $\mat{A}\mat{B}$ and
$\mat{B}\mat{A}$ have the same eigenvalues; see e.g.,~\cite[page
249]{Ortega87}.  Applying this result with $\mat{A} = \Wn^{1/2} \mat{E}
\prcov\mat{E}^\adj$ and $\mat{B} = \Wn^{1/2}$, we have that $\Wn^{1/2} \mat{E}
\prcov\mat{E}^\adj \Wn^{1/2}$ and  $\Wn \mat{E} \prcov\mat{E}^\adj$ have the  same
eigenvalues. Therefore, since  
$\Wn^{1/2} \mat{E} \prcov\mat{E}^\adj \Wn^{1/2}$ is symmetric positive semidefinite, 
it follows that $\Wn \mat{E} \prcov\mat{E}^\adj$ has non-negative eigenvalues.
This implies that $\mat{I} + \Wn \mat{E} \prcov\mat{E}^\adj$ is invertible. 
The relation~\cref{equ:posterior_cov} is now seen as follows:
\[
   \begin{aligned}
   (&\mat{E}^\adj \Wn \mat{E} + \prcov^{-1})(\prcov - \prcov\mat{E}^\adj(\mat{I} + \Wn \mat{E} \prcov\mat{E}^\adj)^{-1}\Wn \mat{E} \prcov)\\
   &=
   \mat{E}^\adj \Wn \mat{E}\prcov
   - \mat{E}^\adj \Wn \mat{E}\prcov\mat{E}^\adj(\mat{I} + \Wn \mat{E} \prcov\mat{E}^\adj)^{-1}\Wn \mat{E} \prcov 
   + \mat{I} - \mat{E}^\adj(\mat{I} + \Wn \mat{E} \prcov\mat{E}^\adj)^{-1}\Wn \mat{E} \prcov\\
  &= \mat{I} + \mat{E}^\adj \Wn \mat{E}\prcov - \mat{E}^\adj ( \Wn \mat{E}\prcov\mat{E}^\adj + \mat{I}) (\mat{I} + \Wn \mat{E} \prcov\mat{E}^\adj)^{-1}\Wn \mat{E}
     \prcov\\
  &= \mat{I} +  \mat{E}^\adj \Wn \mat{E}\prcov - \mat{E}^\adj\Wn \mat{E} \prcov = \mat{I}.
   \end{aligned}
\]
\end{proof}
Notice that this result is well known in the case $\Wn = \ncov^{-1}$. 
The challenge here is to account for the 
possibility of a singular $\Wn$. 
Note that the expression in the left hand side of \cref{equ:posterior_cov} 
involves the inverse of an $n \times n$ matrix, where $n = \Nm +
\Nb$, whereas
the expression on the right hand side involves the inverse of an $\Nd \times \Nd$ matrix. 
It is also worth noting that the proof of \cref{thm:posterior_cov} can be
simplified by the use of the Sherman--Morrison--Woodbury
formula. 
Above, we chose to 
present a direct linear algebra argument instead, for clarity.

We introduce the following notations, which will be used in the remainder of this article.
\begin{equation}\label{equ:Q}
   \Q(\vec{w}) := \left( \mat{I} + \Wn\FG\right)^{-1}\Wn,
   \quad \text{where} \quad
   \FG := \mat{F} \prcovm \mat{F}^\adj + \mat{G}\prcovb\mat{G}^\adj.
\end{equation}

Next, we present tractable representations for the posterior 
mean and covariance operator 
in a
(discretized) Bayesian linear inverse problem, as formulated in 
\cref{sec:discretize}.
Recall that 
the primary parameter
is $\dparm$ and the secondary parameter is $\dparb$.

\begin{theorem}\label{thm:posterior_blocks}
The posterior law of $\begin{bmatrix} \dparm \\ \dparb \end{bmatrix}$
is 
$\GM{\begin{bmatrix} \dparmapm \\ \dparmapb \end{bmatrix}}
   {\begin{bmatrix}
    \postcovm(\vec{w})
    &
    \postcovmb(\vec{w})\\
    \postcovmb^*(\vec{w})
    &
    \postcovb(\vec{w})
 \end{bmatrix}}$,
where
\begin{equation}\label{eq:post_mb_blocks}
\begin{alignedat}{2}
&
\postcovm(\vec{w})  && = \prcovm - \prcovm \mat{F}^\adj \Q(\vec{w}) \mat{F} \prcovm,
\\ 
&
\postcovb(\vec{w})  && = \prcovb - \prcovb \mat{G}^\adj \Q(\vec{w}) \mat{G} \prcovb,
\\
&
\postcovmb(\vec{w}) && = -\prcovm \mat{F}^\adj \Q(\vec{w}) \mat{G} \prcovb,\\
&
\dparmapm(\vec{w}) && = \postcovm(\vec{w})(\mat{F}^* \Wn\obs + 
                     \prcovm^{-1}\dparprm)+ 
                     \postcovmb(\vec{w})(\mat{G}^* \Wn \obs + 
                     \prcovb^{-1} \dparprb),
\\
&
\dparmapb(\vec{w}) && = \postcovb(\vec{w})(\mat{G}^* \Wn\obs + 
                     \prcovb^{-1}\dparprb)+
                     \postcovmb^*(\vec{w})(\mat{F}^* \Wn \obs + 
                     \prcovm^{-1} \dparprm).
\end{alignedat}
\end{equation}
\end{theorem}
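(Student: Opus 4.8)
The plan is to obtain both the covariance and mean formulas directly from the identity already established in \cref{thm:posterior_cov}, specialized to the block-structured operators of this problem. The starting point is the $\vec{w}$-dependent posterior covariance $\postcov(\vec{w}) = (\mat{E}^\adj \Wn \mat{E} + \prcov^{-1})^{-1}$ from \cref{equ:weighted_posterior}. The single observation that drives the whole argument is that, because the prior covariance is block diagonal and $\mat{E} = [\begin{matrix}\mat{F} & \mat{G}\end{matrix}]$ with $\mat{E}^\adj = \begin{bsmallmatrix}\mat{F}^\adj\\\mat{G}^\adj\end{bsmallmatrix}$, one has
\[
\mat{E}\prcov\mat{E}^\adj = \mat{F}\prcovm\mat{F}^\adj + \mat{G}\prcovb\mat{G}^\adj = \FG,
\]
so that the middle factor $(\mat{I} + \Wn\mat{E}\prcov\mat{E}^\adj)^{-1}\Wn$ appearing in \cref{thm:posterior_cov} is \emph{exactly} $\Q(\vec{w})$ from \cref{equ:Q}. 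Substituting into \cref{thm:posterior_cov} then yields the compact form $\postcov(\vec{w}) = \prcov - \prcov\mat{E}^\adj\Q(\vec{w})\mat{E}\prcov$.

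The covariance blocks follow by expanding this expression. Using $\prcov\mat{E}^\adj = \begin{bsmallmatrix}\prcovm\mat{F}^\adj\\\prcovb\mat{G}^\adj\end{bsmallmatrix}$ and $\mat{E}\prcov = \begin{bsmallmatrix}\mat{F}\prcovm & \mat{G}\prcovb\end{bsmallmatrix}$, the product $\prcov\mat{E}^\adj\Q(\vec{w})\mat{E}\prcov$ is a $2\times 2$ block operator whose $(1,1)$, $(2,2)$, and $(1,2)$ entries are $\prcovm\mat{F}^\adj\Q(\vec{w})\mat{F}\prcovm$, $\prcovb\mat{G}^\adj\Q(\vec{w})\mat{G}\prcovb$, and $\prcovm\mat{F}^\adj\Q(\vec{w})\mat{G}\prcovb$, respectively. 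Subtracting from $\prcov = \diag(\prcovm,\prcovb)$ reproduces $\postcovm(\vec{w})$, $\postcovb(\vec{w})$, and $\postcovmb(\vec{w})$ as stated in \cref{eq:post_mb_blocks}. For the $(2,1)$ block I would not recompute it from scratch: since $\postcov(\vec{w})$ is the inverse of the $\MM$-self-adjoint operator $\mat{E}^\adj\Wn\mat{E} + \prcov^{-1}$, it is itself self-adjoint, so its $(2,1)$ block must equal the adjoint of its $(1,2)$ block, namely $\postcovmb^*(\vec{w})$, matching the claimed structure.

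The mean formulas then drop out of the same block decomposition applied to $\dparmap(\vec{w}) = \postcov(\vec{w})(\mat{E}^*\Wn\obs + \prcov^{-1}\dparpr)$ from \cref{equ:weighted_posterior}. Here the right-hand vector splits as $\mat{E}^*\Wn\obs + \prcov^{-1}\dparpr = \begin{bsmallmatrix}\mat{F}^*\Wn\obs + \prcovm^{-1}\dparprm\\\mat{G}^*\Wn\obs + \prcovb^{-1}\dparprb\end{bsmallmatrix}$, and multiplying by the block covariance just computed yields $\dparmapm(\vec{w})$ and $\dparmapb(\vec{w})$ exactly as written in \cref{eq:post_mb_blocks}. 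Gaussianity of the joint posterior is inherited from the setup in \cref{sec:bayes_setup} together with \cref{equ:weighted_posterior}, so only the mean and covariance need to be identified.

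I do not anticipate a genuine obstacle here: once the identification $\mat{E}\prcov\mat{E}^\adj = \FG$ is in hand, the whole statement reduces to routine block-operator bookkeeping built on \cref{thm:posterior_cov}. The only point requiring a little care is the adjoint appearing in the $(2,1)$ block, which I handle by invoking self-adjointness of $\postcov(\vec{w})$ rather than by a direct transpose computation; this sidesteps having to separately establish the symmetry of $\Q(\vec{w})$ relative to the relevant weighted-versus-Euclidean inner products.
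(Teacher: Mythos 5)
Your proposal is correct and follows essentially the same route as the paper's proof: substitute the block forms $\mat{E} = [\begin{matrix}\mat{F} & \mat{G}\end{matrix}]$ and $\prcov = \begin{bsmallmatrix}\prcovm & \mat{0}\\ \mat{0} & \prcovb\end{bsmallmatrix}$ into \cref{thm:posterior_cov}, identify $\mat{E}\prcov\mat{E}^\adj = \FG$ so that the middle factor becomes $\Q(\vec{w})$, expand the resulting $2\times 2$ block operator, and read off the means from \cref{equ:weighted_posterior}. Your one refinement---obtaining the $(2,1)$ block from self-adjointness of $\postcov(\vec{w})$ rather than identifying the directly computed block $-\prcovb\mat{G}^\adj\Q(\vec{w})\mat{F}\prcovm$ with $\postcovmb^*(\vec{w})$ (which implicitly requires the symmetry of $\Q(\vec{w})$)---is a legitimate and slightly cleaner way to handle a point the paper glosses over.
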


\begin{proof}
Recall that the discretized forward operator $\mat{E}$ can be represented in 
a block matrix form $\mat{E} = \begin{bmatrix} \mat{F} & \mat{G} \end{bmatrix}$. 
Using this and the 
expression for $\postcov$ given in \cref{thm:posterior_cov}, 
we obtain
\begin{equation}\label{equ:postcov_blocks}
\begin{aligned}
\postcov(\vec{w}) &= 
\prcov
 -
\prcov
\begin{bmatrix} \mat{F}^\adj 
\\ 
\mat{G}^\adj\end{bmatrix} 
 \left( \mat{I} + \Wn \begin{bmatrix} \mat{F} & \mat{G}\end{bmatrix} 
\prcov
\begin{bmatrix} \mat{F}^\adj 
\\ 
\mat{G}^\adj\end{bmatrix}\right)^{-1}
\Wn\begin{bmatrix} \mat{F} & \mat{G}\end{bmatrix}
\prcov
\\
&=
\prcov
 -
\prcov
 \begin{bmatrix} \mat{F}^\adj \\ \mat{G}^\adj\end{bmatrix}
 \left( \mat{I} + \Wn 
 (\mat{F} \prcovm \mat{F}^\adj + \mat{G}\prcovb\mat{G}^\adj)\right)^{-1}
\Wn\begin{bmatrix} \mat{F} & \mat{G}\end{bmatrix}
\prcov
\\
&= 
 \begin{bmatrix}
    \prcovm & \mat{0} \\ \mat{0} & \prcovb
 \end{bmatrix}
 -
 \begin{bmatrix}
    \prcovm & \mat{0} \\ \mat{0} & \prcovb
 \end{bmatrix}
 \begin{bmatrix} \mat{F}^\adj \\ \mat{G}^\adj\end{bmatrix}
\Q(\vec{w})
\begin{bmatrix} \mat{F} & \mat{G}\end{bmatrix}
\begin{bmatrix}
    \prcovm & \mat{0} \\ \mat{0} & \prcovb
 \end{bmatrix}
\\
&= 
\begin{bmatrix}
    \prcovm - \prcovm \mat{F}^\adj \Q(\vec{w}) \mat{F} \prcovm 
    &
    -\prcovm \mat{F}^\adj \Q(\vec{w}) \mat{G} \prcovb\\
    -\prcovb \mat{G}^\adj \Q(\vec{w}) \mat{F} \prcovm 
    &
    \prcovb - \prcovb \mat{G}^\adj \Q(\vec{w}) \mat{G} \prcovb
 \end{bmatrix}.
\end{aligned}
\end{equation}
This establishes the representation of the 
posterior covariance operator.
The expressions for $\dparmapm(\vec{w})$ and $\dparmapb(\vec{w})$
can be obtained 
using \cref{equ:weighted_posterior} and \cref{equ:postcov_blocks}.
\end{proof}

Using \cref{lem:marginal_fd} in conjunction with \cref{thm:posterior_blocks},
the marginal posterior laws of $\dparm$ and $\dparb$ are given by
$\GM{\dparmapm(\vec{w})}{\postcovm(\vec{w})}$ and
$\GM{\dparmapb(\vec{w})}{\postcovb(\vec{w})}$, respectively.  Since
we target
the primary parameter $\dparm$, we focus on the corresponding marginal
posterior law $\GM{\dparmapm(\vec{w})}{\postcovm(\vec{w})}$. The marginal
covariance operator $\postcovm(\vec{w})$ will be used to define the marginal
A-optimality criterion (see below). Also, note that the expression for 
$\dparmapm$ in \cref{eq:post_mb_blocks} is the sum of two terms:
the first is the familiar expression for the posterior mean if
$\dparb$ was fixed to $\dparb = \vec 0$; the second reflects the impact
of the uncertainty in $\dparb$. 

\subsection{The marginalized A-optimality criterion}
\label{sec:criterion}
The marginalized A-optimal design (\mOED) criterion is given by 
\begin{equation}\label{equ:criterion}
  \Phi(\vec{w}) := \trace(\postcovm(\vec{w})) = \trace(\prcovm) - \trace(\prcovm \mat{F}^\adj \Q(\vec{w}) \mat{F} \prcovm). 
\end{equation}
Next, we show the convexity of the \mOED{} objective.
Before proving this, we consider a slightly more general result.
Below, $\Snm$ denotes the cone of self-adjoint and positive definite operators on $\R^n$ 
equipped with the weighted inner product 
$\mip{\cdot}{\cdot}$.

\begin{theorem}\label{thm:convexity_gen}
Let the function $f:\R^\Ns_{\geq 0} \to \R$ be given by 
\[f(\vec{w}) = \trace(\mat{R} \postcov(\vec{w})\mat{R}^\adj),\]
where
$\mat{R}$ is an $n \times n$ matrix and $\mat{R}^*$ denotes its adjoint
with respect to $\mip{\cdot}{\cdot}$. Then, the function $f$ is convex. 
\end{theorem}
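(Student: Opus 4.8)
The plan is to recognize $f$ as the composition of an affine map with a convex function on the cone $\Snm$, using that such compositions preserve convexity. First I would note that the precision operator $\mat{A}(\vec{w}) := \mat{E}^\adj \Wn \mat{E} + \prcov^{-1}$ is affine in $\vec{w}$: by \cref{eq:Wsigma} the matrix $\Wn$ depends linearly on $\vec{w}$, while $\prcov^{-1}$ is constant, and for $\vec{w}$ in the nonnegative orthant $\mat{A}(\vec{w})$ lies in $\Snm$ (it is $\MM$-self-adjoint, and positive definite since $\mat{E}^\adj\Wn\mat{E}\succeq 0$ and $\prcov^{-1}\succ 0$). Since $\postcov(\vec{w}) = \mat{A}(\vec{w})^{-1}$, we may write $f(\vec{w}) = h(\mat{A}(\vec{w}))$ with $h(\mat{A}) := \trace(\mat{R}\mat{A}^{-1}\mat{R}^\adj)$ on $\Snm$, so it suffices to show that $h$ is convex.

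The core of the argument is the operator convexity of inversion on $\Snm$: for $\mat{A}_0,\mat{A}_1\in\Snm$ and $\lambda\in[0,1]$, with $\mat{A}_\lambda := (1-\lambda)\mat{A}_0+\lambda\mat{A}_1$, one has $\mat{A}_\lambda^{-1} \preceq (1-\lambda)\mat{A}_0^{-1}+\lambda\mat{A}_1^{-1}$ in the Loewner order induced by $\mip{\cdot}{\cdot}$. Because this order is defined relative to the weighted $\MM$-inner product rather than the Euclidean one, I would first pass to standard coordinates through the congruence $\mat{A}\mapsto\tilde{\mat{A}} := \MM^{1/2}\mat{A}\MM^{-1/2}$. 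A short verification shows this map carries $\Snm$ onto the ordinary symmetric positive definite matrices, commutes with inversion, and converts the $\MM$-adjoint $\mat{R}^\adj = \MM^{-1}\mat{R}^\tran\MM$ into the plain transpose $\tilde{\mat{R}}^\tran$; in these coordinates the inequality is exactly the classical operator convexity of $t\mapsto t^{-1}$, which I would cite.

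To finish, I would apply the functional $X\mapsto\trace(\mat{R}X\mat{R}^\adj)$, which is monotone for the $\MM$-Loewner order: for $X\succeq 0$, the cyclicity of the trace together with the same congruence gives $\trace(\mat{R}X\mat{R}^\adj)=\trace(\tilde{\mat{R}}\tilde{X}\tilde{\mat{R}}^\tran)=\norm{\tilde{X}^{1/2}\tilde{\mat{R}}^\tran}_F^2\geq 0$. Combining monotonicity with the operator-convexity inequality yields $h(\mat{A}_\lambda)\leq(1-\lambda)h(\mat{A}_0)+\lambda h(\mat{A}_1)$, and precomposing with the affine map $\mat{A}(\cdot)$ proves convexity of $f$.

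The step I expect to be the main obstacle is the consistent bookkeeping for the weighted $\MM$-inner product: self-adjointness, positivity, the Loewner order, and the adjoint $\mat{R}^\adj$ are all taken with respect to $\mip{\cdot}{\cdot}$, so classical operator convexity does not apply verbatim. The congruence $\MM^{1/2}(\cdot)\MM^{-1/2}$ is what reduces everything to the standard symmetric positive definite setting in one stroke. If one prefers to avoid quoting operator convexity, an equivalent route is to check convexity along an arbitrary segment $\vec{w}(t)=\vec{w}_0+t\vec{d}$: writing $\mat{A}(t)=\mat{A}_0+t\mat{B}$ with $\mat{B}=\mat{E}^\adj(\sum_j d_j\sigma_j^{-2}\vec{e}_j\vec{e}_j^\tran)\mat{E}$, two differentiations give $\tfrac{d^2}{dt^2}f = 2\,\trace(\mat{R}\mat{A}^{-1}\mat{B}\mat{A}^{-1}\mat{B}\mat{A}^{-1}\mat{R}^\adj)$, which the same congruence rewrites as $2\norm{\mat{C}\mat{S}^\tran}_F^2\geq 0$ with $\mat{C}=\tilde{\mat{A}}^{-1/2}\tilde{\mat{B}}\tilde{\mat{A}}^{-1/2}$ and $\mat{S}=\tilde{\mat{R}}\tilde{\mat{A}}^{-1/2}$; notably this requires no sign condition on $\mat{B}$, so it handles directions $\vec{d}$ that point out of the nonnegative orthant.
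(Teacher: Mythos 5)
Your proof is correct, and while it shares the paper's overall skeleton---writing $f$ as the composition of the affine map $\vec{w}\mapsto\mat{A}(\vec{w})=\mat{E}^\adj\Wn\mat{E}+\prcov^{-1}$ with a convex function $\mat{A}\mapsto\trace(\mat{R}\mat{A}^{-1}\mat{R}^\adj)$ on the cone $\Snm$---the key convexity lemma is established by a genuinely different argument. The paper works intrinsically in the weighted inner product: it restricts to a line $\mat{S}+t\mat{B}$ in $\Snm$, diagonalizes $\mat{V}=\mat{S}^{-1/2}\mat{B}\mat{S}^{-1/2}$ in an $\MM$-orthonormal eigenbasis $\{\vec{u}_i\}$, and writes the restricted function as $\sum_i(1+t\lambda_i)^{-1}\mip{\mat{L}^\adj\vec{u}_i}{\mat{L}^\adj\vec{u}_i}$, a nonnegative combination of convex scalar functions; no external theorem is needed beyond the convexity of $t\mapsto(1+t\lambda)^{-1}$. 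You instead transport everything to Euclidean coordinates via $\mat{A}\mapsto\MM^{1/2}\mat{A}\MM^{-1/2}$ (strictly speaking a similarity rather than a congruence, though every property you claim for it---preservation of positivity, compatibility with inversion, conversion of $\mat{R}^\adj=\MM^{-1}\mat{R}^\tran\MM$ into a plain transpose---is correct) and then invoke the classical operator convexity of matrix inversion together with monotonicity of the linear functional $X\mapsto\trace(\mat{R}X\mat{R}^\adj)$ in the Loewner order. Your main route is shorter but leans on a quoted theorem; the paper's is self-contained, and it sidesteps your ``bookkeeping obstacle'' entirely by never leaving the weighted space (spectral decompositions and square roots of $\MM$-self-adjoint operators are available intrinsically). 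Your second-derivative variant, $\tfrac{d^2}{dt^2}f=2\,\trace(\mat{R}\mat{A}^{-1}\mat{B}\mat{A}^{-1}\mat{B}\mat{A}^{-1}\mat{R}^\adj)\geq 0$, restores self-containedness and is arguably the cleanest certificate of the three; your observation that it requires no sign condition on $\mat{B}$ is exactly the point that makes segment arguments valid between arbitrary designs in $\R^{\Ns}_{\geq 0}$, and the paper's proof relies on the same fact by allowing an arbitrary self-adjoint direction $\mat{B}$ subject only to $\mat{S}+t\mat{B}\in\Snm$.
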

\begin{proof}
Let $\mat{A}(\vec{w}) =  \postcov(\vec{w})^{-1}$, and 
note that $\mat{A}(\vec{w}) \in \Snm$ for all $\vec{w} \in \R^\Ns_{\geq 0}$.
First we show the function 
$G(\mat{A}$) = $\trace(\mat{R}\mat{A}^{-1}\mat{R}^\adj)$ 
is convex on $\Snm$.
Consider the restriction of $G$ to a line, $\mat{S} + t\mat{B}$, 
where $\mat{S} \in \Snm$ and  
$\mat{B}$ is self-adjoint; we consider values of $t$ for which 
$\mat{S} + t\mat{B} \in \Snm$. Let $\mat{U}\mat\Lambda\mat{U}^\adj$ be the spectral
decomposition of $\mat{V} = \mat{S}^{-1/2}\mat{B}\mat{S}^{-1/2}$;
here $\mat\Lambda$ is a diagonal matrix with the eigenvalues $\{\lambda_i\}_{i=1}^n$ 
of $\mat{V}$ on 
its diagonal and $\mat{U}$ is a matrix with the corresponding eigenvectors
$\{\vec{u}_i\}_{i = 1}^n$ as its columns.
Letting $\mat{L} = \mat{S}^{-1/2}\mat{R}^\adj$,
we note
\[
\begin{aligned}
  G(\mat{S}+t\mat{B})
&= \trace(\mat{R}\mat{S}^{-1/2}(\mat{I} + t\mat{S}^{-1/2}\mat{B}\mat{S}^{-1/2})^{-1}\mat{S}^{-1/2}\mat{R}^\adj) \\
&= \trace(\mat{L}\mat{L}^\adj(\mat{I} + t\mat{V})^{-1}) 
= \sum_{i=1}^n \mip{\mat{L}\mat{L}^\adj (\mat{I}+t\mat{V})^{-1} \vec{u}_i}{\vec{u}_i} 
= \sum_{i=1}^n (1+t\lambda_i)^{-1} \mip{\mat{L}^\adj \vec{u}_i}{\mat{L}^\adj \vec{u}_i}.
\end{aligned}
\]
Thus, $G(\mat{S}+t\mat{B})$ is a
linear combination of convex functions with non-negative coefficients, 
$\mip{\mat{L}^\adj \vec{u}_i}{\mat{L}^\adj \vec{u}_i} \geq 0$, and is thus convex. 
This shows that $G$ is convex on $\Snm$. 
It remains to show that $f(\vec{w})$ = $G(\mat{A}(\vec{w})$) is convex. Recall that
$\mat{A}(\vec{w})$ = $\prcov^{-1} +
\mat{E}^\adj \Wn \mat{E}$; thus $\mat{A}$ is affine in $\vec{w}$ 
and therefore, for $\alpha \in [0,1]$, 
\begin{multline*}
 f(\alpha \vec{w} + (1- \alpha) \vec{v}) = G(\mat{A}(\alpha \vec{w} + (1- \alpha) \vec{v})) 
 = G(\alpha\mat{A}(\vec{w}) + (1-\alpha)\mat{A}(\vec{v}))\\ 
 \leq \alpha G(\mat{A}(\vec{w})) + (1-\alpha)G(\mat{A}(\vec{v})) 
 = \alpha f(\vec{w}) + (1-\alpha) f(\vec{v}). 
 \end{multline*}
\end{proof}

\begin{corollary}\label{cor:convexity}
The function $\Phi: \R^\Nd_{\geq 0} \to \R$, 
defined in \cref{equ:criterion}, is convex.
\end{corollary}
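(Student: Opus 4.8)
The plan is to derive \cref{cor:convexity} as an immediate specialization of \cref{thm:convexity_gen}. The key observation is that the marginalized criterion $\Phi(\vec w) = \trace(\postcovm(\vec w))$ is the trace of the upper-left block of the full posterior covariance $\postcov(\vec w)$, and that extracting this block can be written as conjugation by a fixed projection matrix. Thus $\Phi$ has exactly the form $\trace(\mat{R}\postcov(\vec w)\mat{R}^\adj)$ for a suitable $\vec w$-independent $\mat{R}$, to which \cref{thm:convexity_gen} directly applies.

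Concretely, I would take the $n \times n$ block projection
\[
\mat{R} = \begin{bmatrix} \mat{I}_{\Nm} & \mat{0} \\ \mat{0} & \mat{0} \end{bmatrix},
\]
which selects the primary-parameter block. The one computation to carry out is the identity $\trace(\mat{R}\postcov(\vec w)\mat{R}^\adj) = \Phi(\vec w)$. Invoking the block form of $\postcov(\vec w)$ from \cref{thm:posterior_blocks}, the product $\mat{R}\postcov(\vec w)\mat{R}^\adj$ has $\postcovm(\vec w)$ in its upper-left block and zeros elsewhere, so its trace is $\trace(\postcovm(\vec w)) = \Phi(\vec w)$, as desired.

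The only point requiring a little care is that the adjoint $\mat{R}^\adj$ in \cref{thm:convexity_gen} is taken with respect to the weighted inner product $\mip{\cdot}{\cdot}$, i.e., $\mat{R}^\adj = \MM^{-1}\mat{R}^\tran\MM$. Since $\mat{R}$ and $\MM = \diag(\mat{M}_1, \mat{M}_2)$ are both block diagonal, a one-line computation gives $\mat{R}^\adj = \mat{R}$; the projection is $\MM$-self-adjoint, so the block extraction above is legitimate. It is also worth noting that the trace, being the sum of eigenvalues, is independent of the inner product used to evaluate it, so no discrepancy arises between the trace appearing in $\Phi$ and the one in \cref{thm:convexity_gen}.

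I do not expect a substantive obstacle here: with $\mat{R} = \mat{R}^\adj$ the block projection chosen above and the identity $\Phi = \trace(\mat{R}\postcov\mat{R}^\adj)$ verified, convexity of $\Phi$ is inherited verbatim from \cref{thm:convexity_gen}. The entire content of the corollary is recognizing the marginal trace as a projected full-covariance trace.
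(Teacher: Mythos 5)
Your proposal is correct and follows exactly the paper's own route: identify $\Phi(\vec w) = \trace(\mat{R}\,\postcov(\vec w)\,\mat{R}^\adj)$ with the block projection $\mat{R} = \begin{bsmallmatrix}\mat{I} & \mat{0}\\ \mat{0} & \mat{0}\end{bsmallmatrix}$ and invoke \cref{thm:convexity_gen}. Your additional check that $\mat{R}^\adj = \MM^{-1}\mat{R}^\tran\MM = \mat{R}$ with respect to the weighted inner product is a detail the paper leaves implicit, and it is a worthwhile one to spell out.
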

\begin{proof}
Using~\cref{equ:postcov_blocks}, we can write
$\Phi(\vec{w})$ as 
\[
  \Phi(\vec{w}) = \trace(\mat{R} \mat{\postcov}(\vec{w}) \mat{R}^\adj)
\quad \text{with } \quad
\mat{R} = \begin{bmatrix} \mat{I} & \mat{0}\\ \mat{0} & \mat{0}\end{bmatrix}.
\]
Thus, the convexity of $\Phi(\vec{w})$ can be concluded 
from Proposition~\ref{thm:convexity_gen}.
\end{proof}

Consider the marginalized A-optimality criterion 
$\Phi(\vec w)$ in \cref{equ:criterion}.
Since the prior covariance operator is independent of $\vec w$,
minimizing $\Phi(\vec{w})$ is equivalent to minimizing
\begin{equation}\label{equ:obj}
\Psi(\vec{w}) := 
-\trace(\mat{F}\prcovm^2\mat{F}^\adj \Q(\vec{w})).
\end{equation}
This is the objective function we use in finding a marginalized A-optimal 
design.
Henceforth, we refer to this objective function as
the \mOED{} objective or the \mOED{} criterion.

\subsection{Computing optimal designs}
\label{sec:optimization}
Here we describe the optimization problem for computing \mOEDs.
The ultimate goal is to find a binary optimal design vector that
minimizes the \mOED{} objective $\Psi$, defined
in~\cref{equ:obj}.  That is, 
letting $\mathcal{X} = \{0, 1\}^\Nd$,
we would like to solve
\begin{equation}\label{equ:binary_optim_problem}
    \min_{\vec{w} \in \mathcal{X}} \Psi(\vec{w}), \quad
    \text{s.t.} \sum_{i=1}^\Nd w_i = N,
\end{equation}
where $N$ is a desired number of sensors.  However, as mentioned
above, solving such a binary optimization problem can be intractable due
to its combinatorial complexity.  One possibility to find an
approximate solution to this problem is via a greedy procedure, i.e.,
place  sensors one-by-one. This method does not require
derivatives of the objective with respect to weights. Greedy approaches
result, in general, in suboptimal solutions, which, in practice,
are often quite good. Computational details of this
approach are discussed in \cref{subsec:greedy}. 
We also compare, in \cref{sec:comparison}, the performance of the
greedy approach against the approach described next.

As an alternative to the greedy approach, one can consider a
relaxation of the problem and allow for design weights 
in the interval $[0, 1]$.  Binary weights are then obtained using
sparsifying penalty functions. Specifically, we consider an
optimization problem of the form
\begin{equation}\label{equ:optim_problem}
\min_{\vec{w} \in \mathcal{W}} \Psi(\vec{w}) + \gamma P(\vec{w}),
\end{equation}
where $\mathcal{W} = [0, 1]^\Nd$, $\Psi(\vec w)$ is the \mOED{}
objective, $\gamma > 0$ is a penalty parameter, and $P(\vec w)$ is a penalty
function. Minimization of \eqref{equ:optim_problem} usually requires
gradients of the objective. Key computational aspects are discussed in
the next section where we outline computational
methods for tackling the \mOED~problem.

\section{Computational methods}\label{sec:method}
In this section, we present a computational framework for computing~\mOEDs.

\subsection{Efficient computation of \mOED{} objective and its gradient}
\label{subsec:oedobs-and-grad}
Consider the objective function $\Psi(\vec w)$ defined
in~\cref{equ:obj}.  We note that the argument of the trace
in~\cref{equ:obj} is an operator defined on $\R^{\Nd \times \Nd}$,
where $\Nd$ is the number of candidate sensor locations (i.e., the
dimension of the measurement vector). This objective function can be
computed as follows:
\begin{equation}\label{equ:obj_compute}
\Psi(\vec{w}) 
   = 
   -\sum_{i=1}^\Nd {\vec{e}_i}^\tran{\FPF \Q(\vec{w}) \vec{e}_i}
   =-\sum_{i=1}^\Nd \vec{e}_i^\tran\FPF\vec{q}_i, 
\quad \text{where } 
\FPF = \mat{F}\prcovm^2\mat{F}^\adj,
\end{equation}
$\vec{q}_i =
\Q(\vec{w}) \vec{e}_i$ with $\Q(\vec{w})$ given in \cref{equ:Q}, 
and $\vec{e}_i$ is the $i$th standard basis vector in $\R^\Nd$, $i = 
1, \ldots, \Nd$. Note that
\begin{equation}\label{equ:qi}
\vec{q}_i = (\mat{I} + \Wn\FG)^{-1}\Wn \vec{e}_i
          = \sigma_i^{-2} w_i (\mat{I} + \Wn\FG)^{-1} \vec{e}_i.
\end{equation}
To derive the expression for the gradient of 
$\Psi$, we first need the following derivative:
\[
\begin{aligned}
\frac{\partial}{\partial w_j} \Q(\vec{w}) 
&= -\sigma_j^{-2}\left( \mat{I} + \Wn\FG\right)^{-1}(\vec{e}_j\vec{e}_j^\tran)\FG
\left( \mat{I} + \Wn\FG\right)^{-1}\Wn
+\sigma_j^{-2}\left( \mat{I} + \Wn\FG\right)^{-1}\vec{e}_j\vec{e}_j^\tran.
\end{aligned}
\]
Thus, 
\[
\begin{aligned}
\frac{\partial\Psi}{\partial w_j} &= -\frac{\partial}{\partial w_j}
\trace(\Q(\vec{w}) \FPF )\\ 
&=\trace\left[\sigma_j^{-2}\left( \mat{I} + \Wn\FG\right)^{-1}\vec{e}_j\vec{e}_j^\tran\FG
\left( \mat{I} + \Wn\FG\right)^{-1}\Wn\FPF\right]
-\trace\left[\sigma_j^{-2} \left( \mat{I} + \Wn\FG\right)^{-1} \vec{e}_j\vec{e}_j^\tran
\FPF\right]
\\
&= 
\sigma_j^{-2}\vec{e}_j^\tran\FG(\mat{I} + \Wn\FG)^{-1} \Wn\FPF
(\mat{I} + \Wn\FG)^{-1} \vec{e}_j 
- \sigma_j^{-2}\vec{e}_j^\tran \FPF (\mat{I} + \Wn\FG)^{-1}\vec{e}_j\\
&= 
\sum_{i = 1}^\Nd 
w_i \sigma_i^{-2}\sigma_j^{-2}\vec{e}_j^\tran\FG(\mat{I} + \Wn\FG)^{-1} \vec{e}_i\vec{e}_i^\tran
\FPF
(\mat{I} + \Wn\FG)^{-1} \vec{e}_j 
- \sigma_j^{-2}\vec{e}_j^\tran \FPF (\mat{I} + \Wn\FG)^{-1}\vec{e}_j,
\end{aligned}
\]
where we have used the cyclic property of the trace and the definition of 
$\Wn$ in \cref{eq:Wsigma}.
Letting $\vec{y}_i = (\mat{I} + \Wn\FG)^{-1} \vec{e}_i$, $i = 1, \ldots, \Nd$,
and substituting in the above expression,
leads to
\begin{equation}\label{equ:grad}
\frac{\partial\Psi}{\partial w_j} = 
\sum_{i = 1}^\Nd
w_i \sigma_i^{-2}\sigma_j^{-2}(\vec{e}_j^\tran \FG\vec{y}_i) \vec{e}_i^\tran 
\FPF
\vec{y}_j
- \sigma_j^{-2}\vec{e}_j^\tran 
\FPF\vec{y}_j,  
\quad 
j = 1, \ldots, \Nd.
\end{equation}
Note that the vectors $\vec{q}_i$ in~\cref{equ:qi} and 
vectors $\vec{y}_i$ in the definition of the gradient are related 
according to $\vec{q}_i = w_i \sigma_i^{-2} \vec{y}_i$, 
$i = 1, \ldots, \Nd$.

The matrices $\FG$ and $\FPF$ in \cref{equ:qi,equ:grad}
are of size $\Nd \times \Nd$. As mentioned previously, in many cases,
the measurement dimension $\Nd$ is considerably smaller than the dimension of
the discretized primary and secondary parameters.  This case
typically arises in
inverse problems governed by PDEs, where the
dimension of the discretized parameters grow upon grid refinements,
while the measurement dimension $\Nd$ is fixed a priori. 

The matrices $\FG$ and $\FPF$ can be built in a
precomputation step, as outlined in~\cref{alg:precompute}. The computational
cost to build $\FG$ and $\FPF$ is $3\Nd$ forward and $2\Nd$ adjoint PDE solves.
Once the matrices $\FG$ and $\FPF$ are computed, the OED objective and gradient
evaluation can be performed without further PDE solves and require only 
linear algebra operations; see \cref{alg:aopt}.
The cost of evaluating
the objective function is dominated by the cost of steps 1--3, which amount
to computing $\mat{Y} = (\mat{I} + \Wn\FG)^{-1}$; this can be done in
$\mathcal{O}(n_\text{d}^3)$ arithmetic operations, by precomputing an LU
factorization of $\mat{I} + \Wn\FG$ and then performing triangular solves to
compute columns of $\mat{Y}$. We also need the matrix-matrix product
$\mat{D}\mat{Y}$ (see step 5 of~\cref{alg:aopt}), which requires an additional $\mathcal{O}(n_d^3)$ 
operations.  The additional effort in computing the gradient is dominated
by one matrix-matrix product, $\mat{C}\mat{Y}$, amounting to
$\mathcal{O}(n_\text{d}^3)$ arithmetic operations.

\begin{algorithm}[t!]
\caption{Computing matrices $\mat{C}$ in~\cref{equ:Q} and
  $\mat{D}$ in \cref{equ:obj_compute} needed for \mOED~objective and gradient evaluation.}
\begin{algorithmic}[1]\label{alg:precompute}

\FOR {$i = 1$ to $\Nd$}
\STATE Compute $\vec{a}_i = \prcovm \mat{F}^\adj\vec{e}_i$
\STATE Compute $\vec{d}_i = \mat{F}\prcovm \vec{a}_i$
\hfill\COMMENT{columns of $\FPF = \mat{F}\prcovm^2\mat{F}^\adj$}
\STATE Compute $\vec{c}_i = \mat{F}\vec{a}_i + \mat{G}\prcovb\mat{G}^\adj \vec{e}_i$
\hfill\COMMENT{columns of $\FG = \mat{F} \prcovm \mat{F}^\adj + \mat{G}\prcovb\mat{G}^\adj$}
\ENDFOR
\STATE Build $\FG = 
[\begin{matrix} \vec{c}_1 & \cdots & \vec{c}_\Nd\end{matrix}]$
and 
$\FPF = [\begin{matrix} \vec{d}_1 & \cdots & \vec{d}_\Nd\end{matrix}]$
\end{algorithmic}
\end{algorithm}

\renewcommand{\algorithmicrequire}{\textbf{Input:}}
\renewcommand{\algorithmicensure}{\textbf{Output:}}
\begin{algorithm}[h!]
\caption{Computing $\Psi(\vec{w})$ and its gradient
  $\nabla\Psi(\vec{w})$.}
\begin{algorithmic}[1]\label{alg:aopt}
\REQUIRE Design vector $\vec{w}$.
\ENSURE  $\Psi = \Psi(\vec{w})$ and $\nabla\Psi = \nabla\Psi(\vec{w})$
\STATE \texttt{/* evaluation of the objective function */}
\FOR {$i = 1$ to $\Nd$}
\STATE Solve the system 
$(\mat{I} + \Wn\FG)\vec{y}_i = \vec{e}_i$
\ENDFOR
\STATE Compute $\displaystyle\Psi = -\sum_{i=1}^\Nd 
w_i \sigma_i^{-2}  \vec{e}_i^\tran \FPF\mat{Y} \vec{e}_i$
\hfill
\COMMENT{$\mat{Y} = [\begin{matrix} \vec{y}_1 & \vec{y}_2 & \cdots & \vec{y}_\Nd
\end{matrix}]$}
\STATE \texttt{/* evaluation of the gradient */}
\FOR {$j = 1$ to $\Nd$}
\STATE Compute
$\displaystyle
\frac{\partial\Psi}{\partial w_j} =
\sum_{i = 1}^\Nd w_i \sigma_i^{-2}\sigma_j^{-2}(\vec{e}_j^\tran \FG \mat{Y} \vec{e}_i) 
                     (\vec{e}_i^\tran \FPF \mat{Y} \vec{e}_j) 
                     - \sigma_j^{-2} \vec{e}_j^\tran \FPF \mat{Y} \vec{e}_j$
\ENDFOR
\end{algorithmic}
\end{algorithm}

\subsection{Sparsity control}\label{subsec:sparsity}
Here we discuss several options for choosing 
the penalty function $P(\vec w)$ in~\cref{equ:optim_problem}.
A straightforward choice for $P(\vec w)$ 
is the $\ell_1$-norm of $\vec w$; see
e.g.,~\cite{HaberHoreshTenorio08,HaberMagnantLuceroEtAl12}.  As is well-known,
the $\ell_1$-penalty promotes sparsity, but not necessarily a binary structure,
in the computed design vectors.  Another option is to solve a sequence of
optimization problems where penalty functions approximating $\ell_0$-``norm'' 
(the number of nonzero elements in a vector)
are used. An example is the so-called regularized $\ell_0$-sparsification
approach proposed in~\cite{AlexanderianPetraStadlerEtAl14}; in this approach, 
which we use in the present work, a
continuation approach is used, and a sequence of optimization problems, with
non-convex penalty functions approaching the $\ell_0$-norm, are solved.  A
related approach is the use of reweighted $\ell_1$-minimization, as done
in~\cite{HermanAlexanderianSaibaba20}.  Solving optimization problems with
continuous weights, combined with a suitable penalty method, enables the use of
powerful gradient-based optimization algorithms to explore the set of
admissible designs. The effectiveness of such approaches in obtaining optimal
sensor placements has been demonstrated in a number of previous works; see
e.g.,~\cite{HaberHoreshTenorio08,HaberMagnantLuceroEtAl12,AlexanderianPetraStadlerEtAl14,HermanAlexanderianSaibaba20}.

\subsection{Greedy sensor placement}\label{subsec:greedy}
An alternative approach for
finding sparse \mOEDs{} is to use a greedy strategy. 
Greedy approaches have been used successfully in many sensor
placement applications to obtain designs that, while suboptimal, provide near
optimal performance; see
e.g.,~\cite{KrauseSinghGuestrin08,ChamonRibeiro17,ShulkindHoreshAvron18,
Jagalur-MohanMarzouk20}. 
In a greedy approach, we place sensors one at a time: in
each step, we select a sensor that provides the largest decrease in the 
design criterion. A greedy approach can be attractive due to its simplicity and the
fact that  it does not require the gradient of the design criterion. However, the
computational complexity of greedy sensor placement, in terms of function
evaluations, scales with the number of candidate sensor locations and the number 
of the sensors in the optimal design. Note that the computational cost, in terms of function evaluations, of 
finding a greedy sensor placement (in its most basic form) with 
$K$ sensors is 
\begin{equation}\label{equ:greedy_cost}
     C(K, \Nd) = K \Nd - (K-1)K/2.
\end{equation}

\section{Model problem setup}\label{sec:model}
To illustrate our approach for computing optimal designs under
reducible uncertainty, we consider a linear inverse problem governed
by a time-dependent advection-diffusion equation with two sources of
uncertainty: the parameter of primary interest is a time-dependent
scalar-valued function $\iparm=\iparm(t)$, which models the time amplitude
of a source entering on the right hand side of the equation. The
second uncertain parameter is the spatially distributed initial
condition $\iparb=\iparb(\vec x)$. Specifically, we consider:
\begin{subequations}\label{eq:ad}
\begin{alignat}{2}
    u_t - \kappa\Delta u + \vec{v}\cdot\nabla u &= \delta(\vec{x})m(t) & \quad&\text{in
    }\D\times \DT,\label{eq:ad1} \\
    u(\cdot, 0) &= \iparb(\vec{x})  &&\text{in } \D ,\label{eq:ad2} \\
    \kappa\nabla u\cdot \vec{n} &= 0 &&\text{on } \partial\D \times
    \DT. \label{eq:ad3}
\end{alignat}
\end{subequations}
Here, $\D$ is a bounded open set in $\R^2$, the time interval
$\DT=(0,T)$, where $T>0$ is a final time, $\kappa > 0$ is the
diffusion coefficient, and $\vec{v}$ is a given velocity field.
Note that the solution $u(\vec{x},t)$, which can be interpreted as concentration, depends affinely on $m$
and $b$. In
our numerical experiments, $\kappa = 0.001$ and $\D$ is a unit square with two cutouts as shown in 
\cref{fig:velocity_prior}~(left). If \eqref{eq:ad1} models the flow of
a contaminant in a region, the cutouts could represent
buildings, for instance. The velocity field $\vec{v}$ (shown in \cref{fig:velocity_prior}) 
is obtained by
solving Navier-Stokes equations with no-outflow boundary conditions
and non-zero tangential boundary conditions as
in~\cite{AlexanderianPetraStadlerEtAl14}.
The function $\delta$ in the source term is given
by a mollified delta-function:
\begin{equation}\label{eq:source}
    \delta(\vec{x}) = \left(\frac{1}{2\pi L} e^{-\frac{1}{2L^2}
                   \| \vec{x} - \vec{x}_0\|^2}\right),
\end{equation}
where the ``correlation length'' $L$ is $0.05$ in our experiments, and
$\vec{x}_0=(0.5,0.35)$ as indicated by the red dot in 
\cref{fig:velocity_prior}~(left).

\subsection{Parameter-to-observable map}  The parameter-to-observable map
maps the time evolution of the right hand side amplitude, $\iparm\in
L^2(\T)$ and the initial condition $\iparb \in L^2(\D)$ to point
measurements of the solution of the advection-diffusion equation
\cref{eq:ad}.
To write it in the form
\cref{equ:forward_model}, we define the continuous linear operators
$\Sm$ and $\Sb$ as follows: $\Sm$ maps $m$ to the PDE solution $u$,
with $b=0$, and $\Sb$ maps $b$ to the PDE solution $u$, with
$m=0$.
Then, the solution to the initial-boundary value problem~\cref{eq:ad}
can be written as $u = \Sm \iparm + \Sb\iparb$; see~\cite[p.152]{Troltzsch10}.
Next, let $\B$ be a linear observation operator that extracts the
values of $u(\vec{x}, t)$ on a set of sensor locations $\{\vec{x}_1,
\vec{x_2}, \dots, \vec{x}_{\Nd}\}\in\D$, and takes an average of $u$
over the  time interval $[0.95,0.99]$.
Then $\F = \B \Sm $ and $\G = \B \Sb $ map the primary inference parameter
$m$ and the additional uncertain parameter $b$ to measurement $\obs
\in \R^{\Nd}$:
\begin{equation}\label{equ:param-to-obs}
\F :\iparm(t)
   \,
   \stackrel{\Sm}{\longmapsto}
   \,
   u(\vec{x}, t)
   \,
   \stackrel{\B}{\longmapsto}
   \,
   \obs, \qquad 
   \G :\iparb(\vec{x})
   \,
   \stackrel{\Sb}{\longmapsto}
   \,
   u(\vec{x}, t)
   \,
   \stackrel{\B}{\longmapsto}
   \,
   \obs. 
\end{equation}
The
corresponding discrete parameter-to-observable maps $\FF$ and $\GG$
are obtained through discretization using, for instance, finite elements.

Computations of derivatives of an objective that involves the
parameter-to-observable map requires the adjoint operators $\F^{\adj}$
and $\G^{\adj}$. These can be derived using  the
formal Lagrangian method, resulting in the following adjoint
equations \cite{Troltzsch10}.  Given a vector of observations $\obs
\in \R^{\Nd}$,
we first solve the adjoint equation
(see~\cite{AkcelikBirosDraganescuEtAl05,AlexanderianPetraStadlerEtAl14})
for the  adjoint variable $p = p(\vec{x}, t)$
\begin{subequations}
\begin{alignat}{2}\label{eq:ad:adj}
    -p_t - \nabla \cdot (p \vec{v}) - \kappa\Delta p  &= - \B^{\adj} \obs &\quad&\text{ in
    }\D\times \DT,\\
    p(\cdot, T) &= 0  &&\text{ in } \D,  \\
    (\vec{v}p+\kappa\nabla p)\cdot \vec{n} &=  0 &&\text{ on }
    \partial\D\times \DT,
\end{alignat}
\end{subequations}
and obtain the action of the adjoint operators as $\F^*\obs =
-\int_{\D} f(\vec{x}) p(\vec{x}, \cdot) d\vec{x}$ and $\G^*\obs =
-p(\cdot, 0)$.

\begin{figure}[bt]\centering
\includegraphics[width=.36\textwidth]{./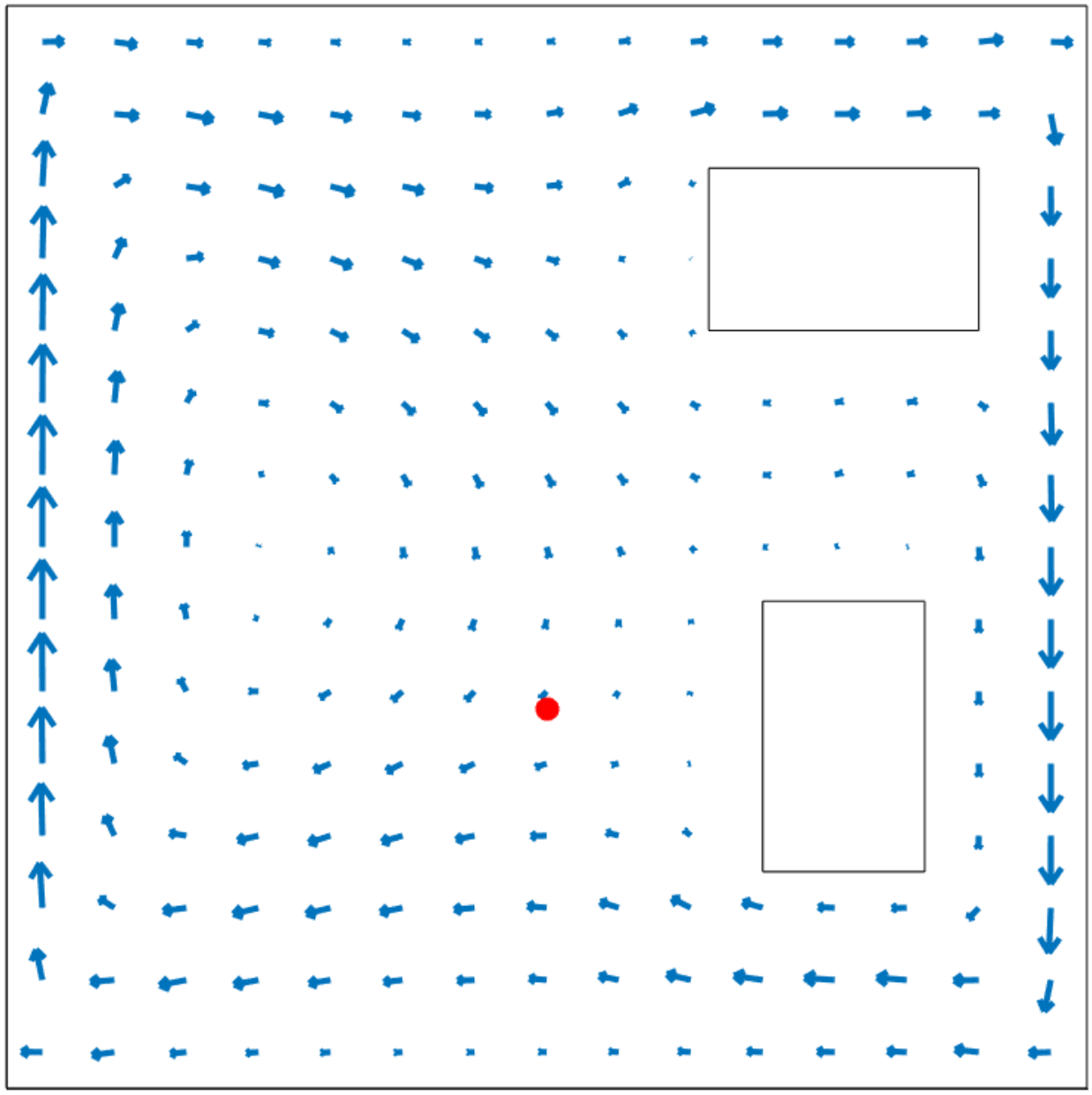}\hspace{.5cm}
\includegraphics[width=.5\textwidth]{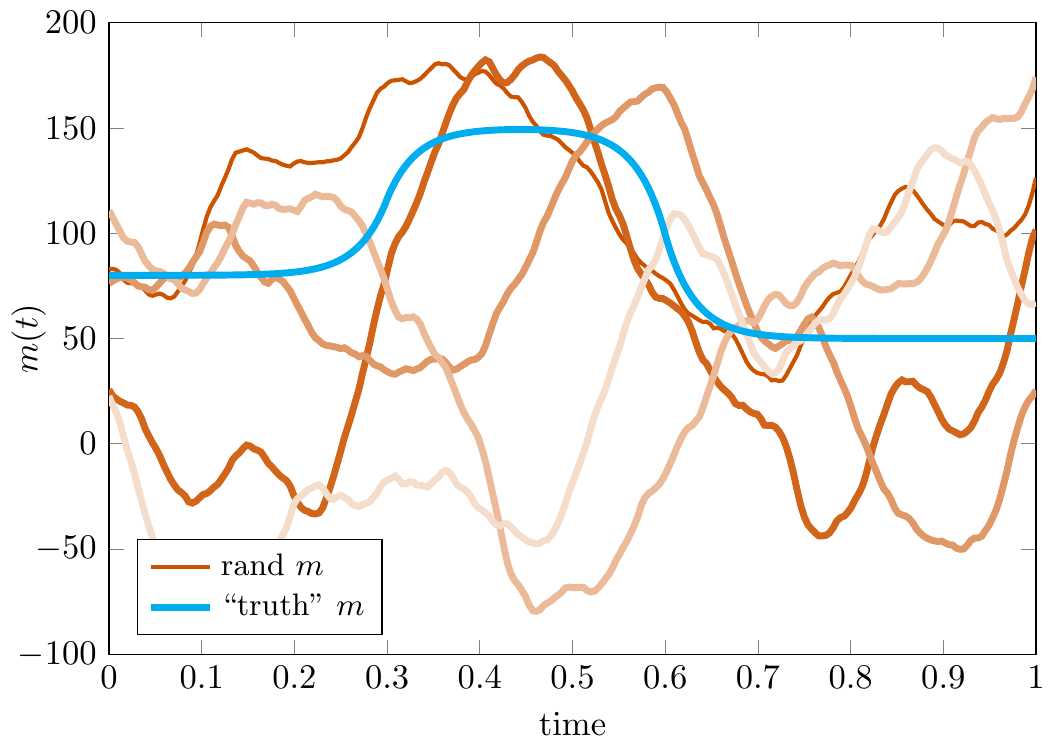}
\caption{Left: Sketch of domain $\D$ and velocity field $\vec{v}$ in
\eqref{eq:ad}. The red dot indicates the location
$\vec{x}_0=(0.5,0.35)$ where the source term \eqref{eq:source} is centered.
Right: the ``truth'' source term
$\iparm$ and five samples from the prior distribution of $\iparm$ shown in cyan
and various shades of orange, respectively.} 
\label{fig:velocity_prior}
\end{figure}

\subsection{Prior laws of $\iparm$ and $\iparb$} To complete the
definition of the Bayesian inverse problem, we specify the prior laws
for $\iparm$ and $\iparb$. We assume both to be Gaussian random
fields, and thus it is sufficient to specify the mean and covariance
operator. For the primary parameter $\iparm$, which is a function of
time only, we choose the mean to be the constant function
$\iparprm\equiv 65$, and specify the covariance operator $\iprcovm$ according 
to
\[
    [\iprcovm z](t) = \int_\T c(s, t) z(t) \, dt, \quad z \in L^2(\T),
\]
where we chose the Mat\'ern-3/2 covariance kernel
\begin{equation}\label{equ:matern_cov} 
   c(s, t)= \sigma^{2}
            \left(1+{\frac {{\sqrt {3}}|s-t|}{\ell}}\right)
            \exp\left(-{\frac {{\sqrt {3}}|s-t|}{\ell }}\right).
\end{equation}
This covariance function ensures that draws from the prior law of $m$ are
(almost surely) contintinuously differentiable; see,
e.g.,~\cite{HandcockStein93,WilliamsRasmussen06,LindgrenRueLindstrom11}.  In
our numerical experiments, we use the parameters $\sigma = 80$ and $\ell =
0.17$ in~\eqref{equ:matern_cov}. Samples from the resulting
distribution are shown in \cref{fig:velocity_prior}~(right).

The realizations of the secondary parameter $\iparb$ are functions
defined over the spatial domain $\D$. For the distribution of $\iparb$ we
choose a Gaussian with mean
$\iparprb\equiv 50$, and a Laplacian-like covariance
operator of the form $(-\epsilon\Delta + \alpha
I)^{-2}$ \cite{Stuart10}, with $\epsilon=4.5\times10^{-3}$ and
$\alpha=2.2\times10^{-1}$. We equip the Laplace operator with 
homogeneous Robin boundary conditions with constant coefficient.
We do this to mitigate undesired boundary
effects that can arise when PDE operators are used to define
covariance operators \cite{RoininenHuttunenLasanen14,DaonStadler18}.

\subsection{Discretization}
We discretize the forward problem using linear finite elements on
triangular meshes in space and use the implicit Euler method in time. 
This guides the discretization of the primary and secondary uncertainties
$m$ and $b$. Specifically, the discretized uncertain source terms is
the vector $\vec{m}$ whose entries are the values of $m$ at the time-steps
used by the forward solver. We discretize the $L^2(\T)$ inner product
using quadrature. That is, for $f, g \in L^2(\T)$,
\[
    \ip{f}{g}_1 = \int_\T f(t) g(t) \,dt 
    \approx \sum_{j=1}^{\Nm} \nu_j f(t_j) g(t_j) = \vec{f}^\tran \mat{M}_1 \vec{g}
    =: \ip{\vec{f}}{\vec{g}}_{\mathrm{M}_1},
\]
where $\{ \nu_j \}_{j=1}^{\Nm}$ are quadrature weights,
$\vec{f}$ and $\vec{g}$ are vectors (in $\R^{\Nm}$) of function values at the time-steps, and 
$\mat{M}_1 = \diag(\nu_1, \nu_2, \ldots, \nu_{\Nm})$. In the present work, 
we use the composite trapezoid rule to discretize the $L^2(\T)$ inner product.

The uncertain initial state $b$ is discretized using finite element Lagrange nodal 
basis functions, $\varphi_1(\vec{x})$, \ldots, $\varphi_{\Nb}(\vec{x})$. This
leads to the discretization 
$
    b(\vec{x}) \approx b_h(\vec{x}) = \sum_{j=1}^{\Nb} b_i \varphi_i(\vec{x}).
$
The discretized initial state is given by the vector $\vec{b}$ of
finite-element coefficients. This finite element method is also used
to discretize the PDE operator $(-\epsilon\Delta + \alpha I)$, which
is the square root of the covariance operator of the distribution of
$\vec{b}$. The covariance operator is thus defined as the square of
the finite element operator, corresponding to a mixed discretization
of the 4th-order covariance operator \cite{Bui-ThanhGhattasMartinEtAl13}.
Also, note that the discretized $L^2(\D)$-inner product is 
given by 
$
    \ip{\vec{u}}{\vec{v}}_{\mathrm{M}_2} = \vec{u}^\tran \mat{M}_2 \vec{v},
\text{ for }
\vec{u}, \vec{v} \in \R^{\Nb},
$
where $\mat{M}_2$ is the finite-element mass matrix.

In the numerical experiments below, we use a discretization with $\Nm = 257$
time steps and $\Nb = 1{,}529$ spatial degrees of freedom.  The ``truth'' primary
parameter $\iparm$ is shown in \cref{fig:velocity_prior}~(right), and the ``truth''
secondary parameter $\iparb$ is given by a random draw from the prior law of
$\iparb$, depicted in \cref{fig:forward_sol}~(top left).
For
computing solutions for the inverse problem, we synthesize data using
``truth'' parameters $b$ and $m$, and add Gaussian noise with 
standard deviation $\sigma_\text{noise} = 0.25$ to each data
point. That is, we assume $\ncov = \sigma_\text{noise}^2 \mat{I}$, with 
$\sigma_\text{noise} = 0.25$. Notice that the sensor measurements obtained from the model 
range approximately in the interval
$[51, 54]$; see e.g.,~\cref{fig:forward_sol}~(top right). Thus,
a noise standard deviation of $0.25$ is significant compared to the variations
of model output at the sensors.

\subsection{Illustrating the impact of the secondary uncertainty}\label{sec:primary_secondary}
To depict the impact of the secondary uncertainty on the solution of the forward 
problem, in
\cref{fig:forward_sol} 
we show snapshots of
the solution of the state equation.
Here, we use two
random draws from the prior distribution of $\iparb$, i.e., the
secondary uncertainty, as initial conditions. Recall that the initial
condition used for the first row is also used as ``truth'' secondary parameter.
For the primary uncertainty, the time evolution of the right hand
side source, the ``truth'' parameter (see
\cref{fig:velocity_prior}~(right)) is used.
Note that even at the final snapshot, around which measurements are
taken for inference, distinct
differences caused by the different initial conditions are visible.
This indicates that the uncertainty in the initial state cannot be ignored.

\def \pos {0.5\columnwidth}
\addtolength\abovecaptionskip{-15pt}
\begin{figure}[ht]\centering
  \begin{tikzpicture}
    \node (1) at (0*\pos-0.53*\pos, 0*\pos+1.1*\pos){\includegraphics[width=.24\textwidth]{./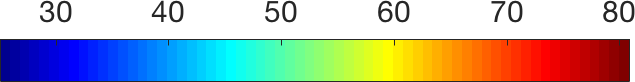}};
    \node (2) at (0.5*\pos, 0*\pos+1.1*\pos){\includegraphics[width=.24\textwidth]{./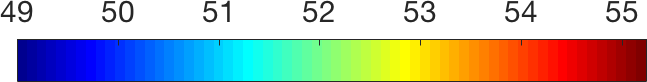}};
    \node (3) at (0*\pos-0.53*\pos, 1.0*\pos-0.2*\pos){\includegraphics[width=.24\textwidth]{./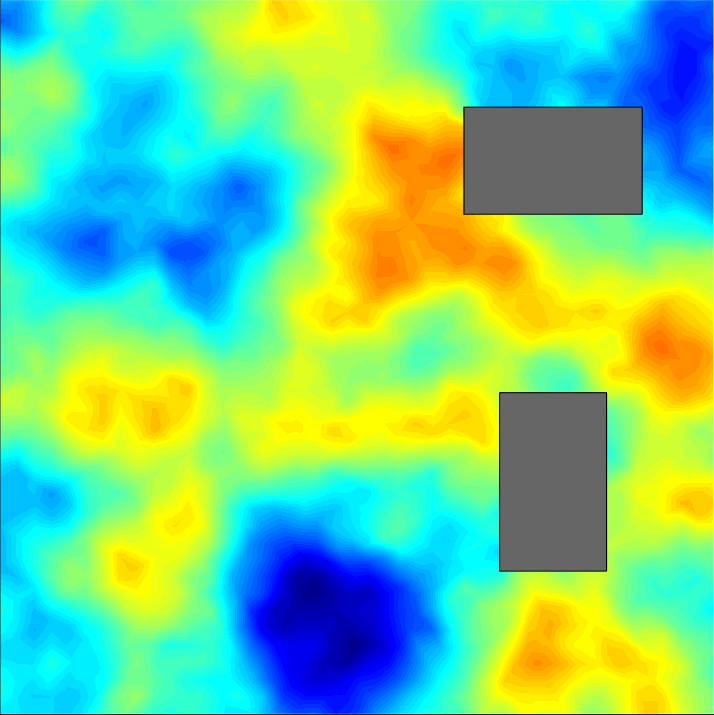}};
    \node (4) at (0.0*\pos, 1.0*\pos-0.2*\pos){\includegraphics[width=.24\textwidth]{./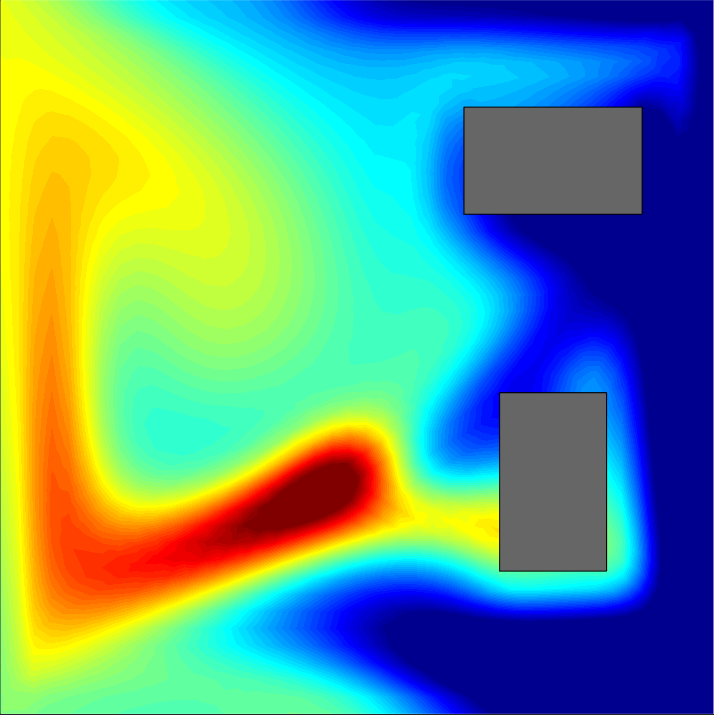}};
    \node (5) at (0.5*\pos, 1.0*\pos-0.2*\pos){\includegraphics[width=.24\textwidth]{./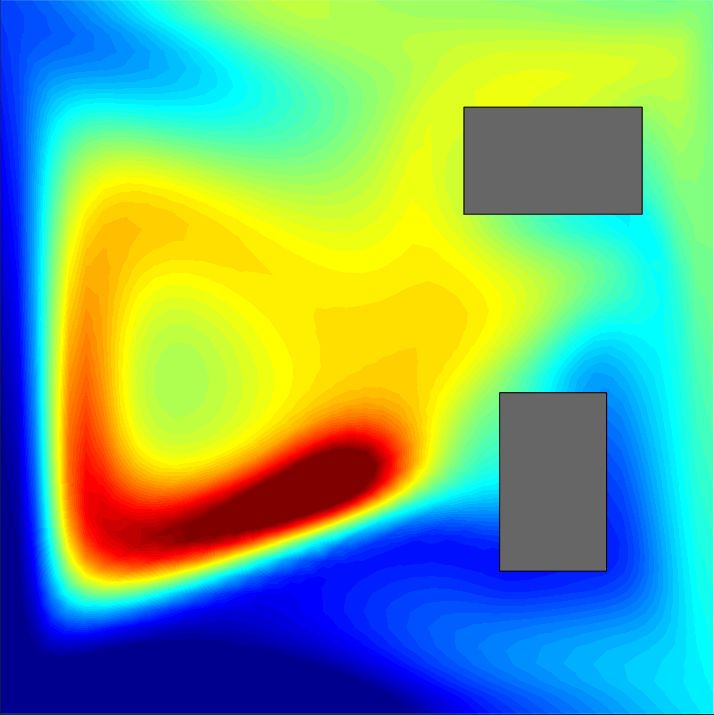}};
    \node (6) at (1.0*\pos,1.0*\pos-0.2*\pos){\includegraphics[width=.24\textwidth]{./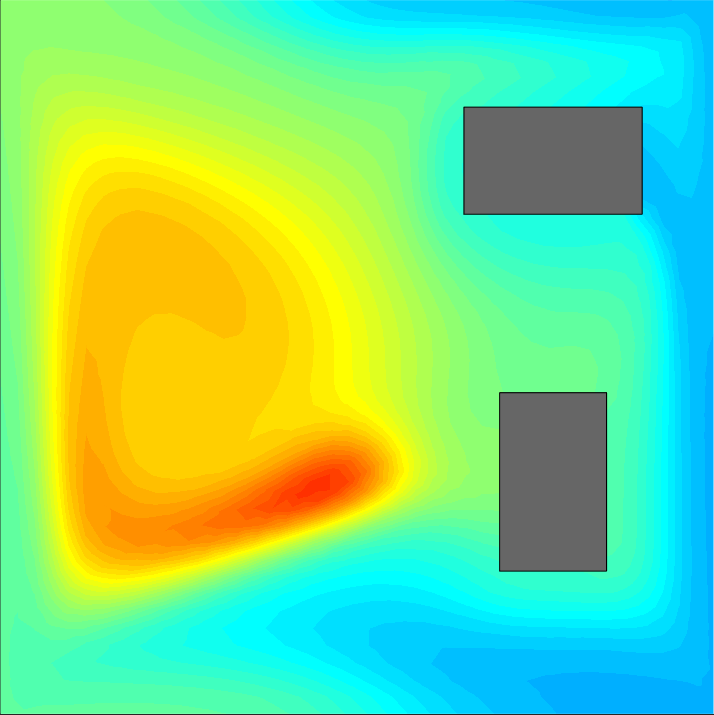}};
    \node (7) at (0*\pos-0.53*\pos, 0.5*\pos-0.2*\pos){\includegraphics[width=.24\textwidth]{./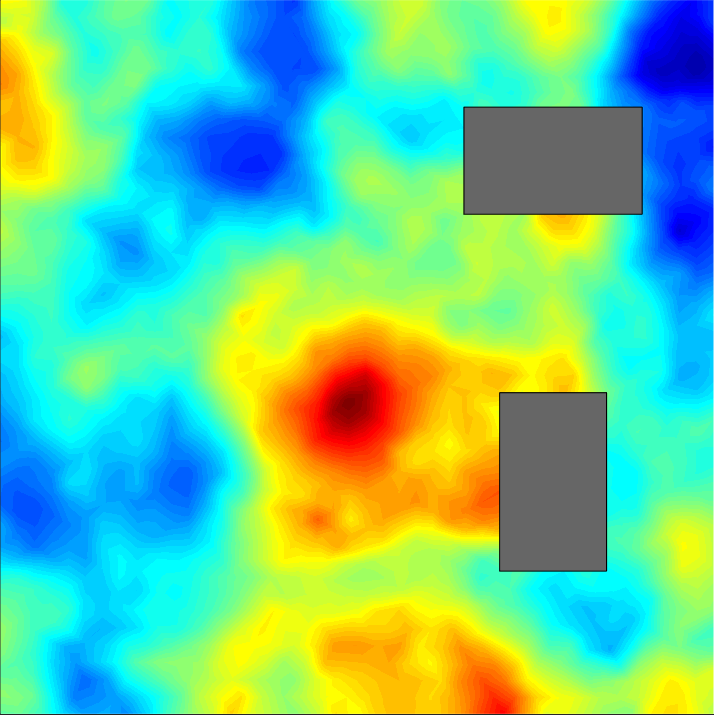}};
    \node (8) at (0.0*\pos, 0.5*\pos-0.2*\pos){\includegraphics[width=.24\textwidth]{./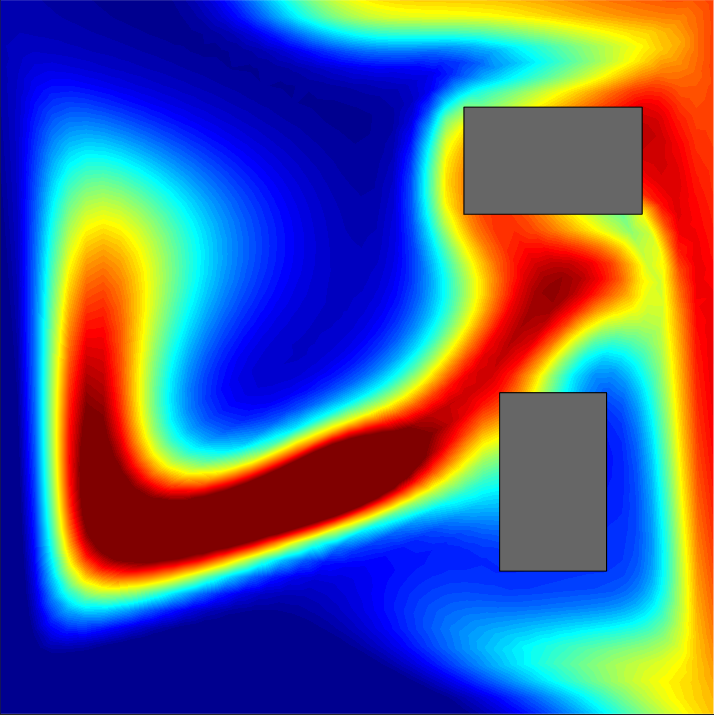}};
    \node (9) at (0.5*\pos, 0.5*\pos-0.2*\pos){\includegraphics[width=.24\textwidth]{./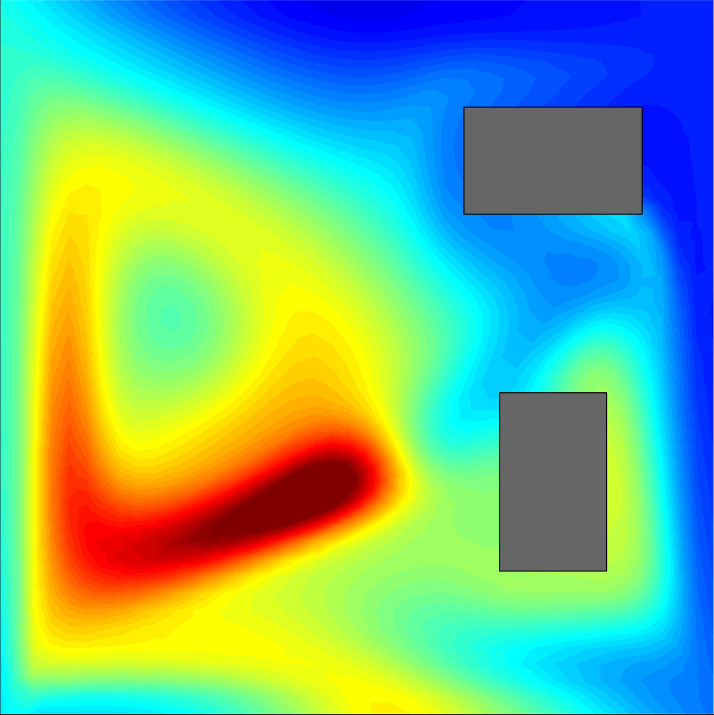}};
    \node (10) at (1.0*\pos, 0.5*\pos-0.2*\pos){\includegraphics[width=.24\textwidth]{./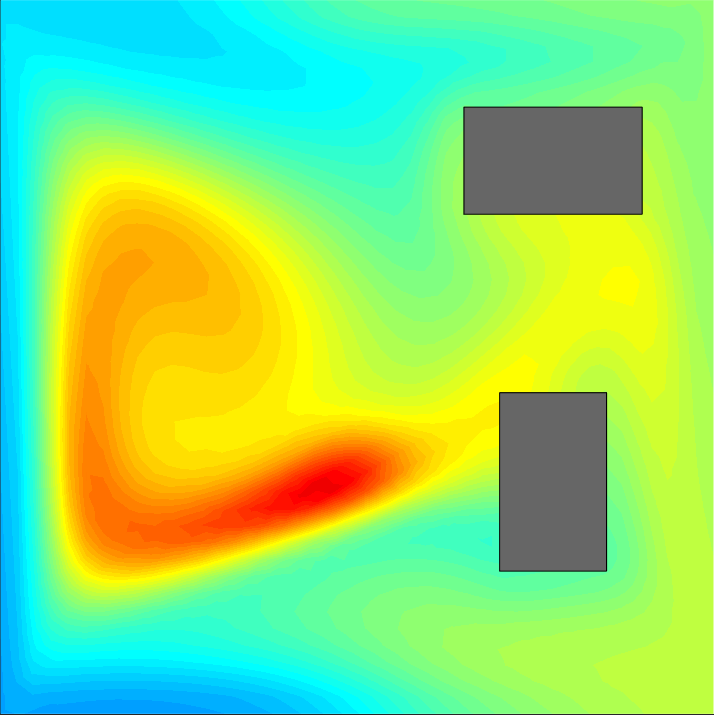}};
  \end{tikzpicture}
\caption{Shown in each row are snapshots of the concentration at times
  $t=0,0.4,0.6,1$ (from left to right). For the primary parameter
  $m$ entering on the right hand side of \cref{eq:ad1}, the ``truth''
  parameter shown in \cref{fig:velocity_prior}~(right) is used. For the secondary
  parameter $\iparb$, i.e., the initial condition, two different realizations
  from the distribution of $\iparb$ are used. Note that a different
  colorbar is used for the initial conditions than for the other snapshots.} 
\label{fig:forward_sol}
\end{figure}
\addtolength\abovecaptionskip{15pt}

\section{Computational results}\label{sec:numerics}
In this section, we present numerical results
for the model problem
described in \cref{sec:model}.
In \cref{sec:comparison}, we compare
the performance of regularized $\ell_0$-sparsification and greedy
approaches for computing
\mOEDs. Then, in \cref{subsec:results:posterior,subsec:results:MAP}, we demonstrate the
importance of taking the additional model uncertainty into account for
computing sensor placements.

\subsection{Comparison of sparsification algorithms}\label{sec:comparison}
Here, we compare the two different approaches to obtain binary \mOEDs{}
discussed in \cref{sec:method}.
As discussed in~\cref{subsec:sparsity}, when using $\ell_0$-sparsification we
solve a sequence of optimization problems with non-convex penalty functions
using a gradient-based optimization
algorithm. Here, we use MATLAB's interior point quasi-Newton
solver provided by the \verb+fmincon+ function, which we supply with routines implementing
the \mOED~objective and its gradient.  In contrast, the greedy
approach only requires the \mOED~objective.  As can be seen in
\cref{fig:comparedesigns}~(left), the greedy and the $\ell_0$-sparsified
designs perform similary. While in this figure the
$\ell_0$-sparsification finds slightly lower objective values, we have
also observed tests where the objective values are identical or the
greedy approach is slightly better.

It is also important to
consider the computational cost of these algorithms.  We do so by recording the
number of \mOED{} objective function evaluations required by the two algorithms in
\cref{fig:comparedesigns}~(right). Note that the cost of greedy
sensor placement scales with the number of sensors in the optimal
design, see also~\cref{equ:greedy_cost}. The cost of
the $\ell_0$-sparsification, in terms of function evaluations, remains
nearly constant. Of course, the regularized $\ell_0$-sparsification
method requires gradients additionally to objective
evaluations. However, as discussed in \cref{subsec:oedobs-and-grad},
the additional cost of computing the gradient is small compared to the
cost of \mOED~objective function evaluation. Therefore, the number of 
objective function evaluations is a reasonable  measure to compare 
the cost of the two algorithms.

\begin{figure}[ht]
  \centering
  \includegraphics[width=.8\columnwidth]{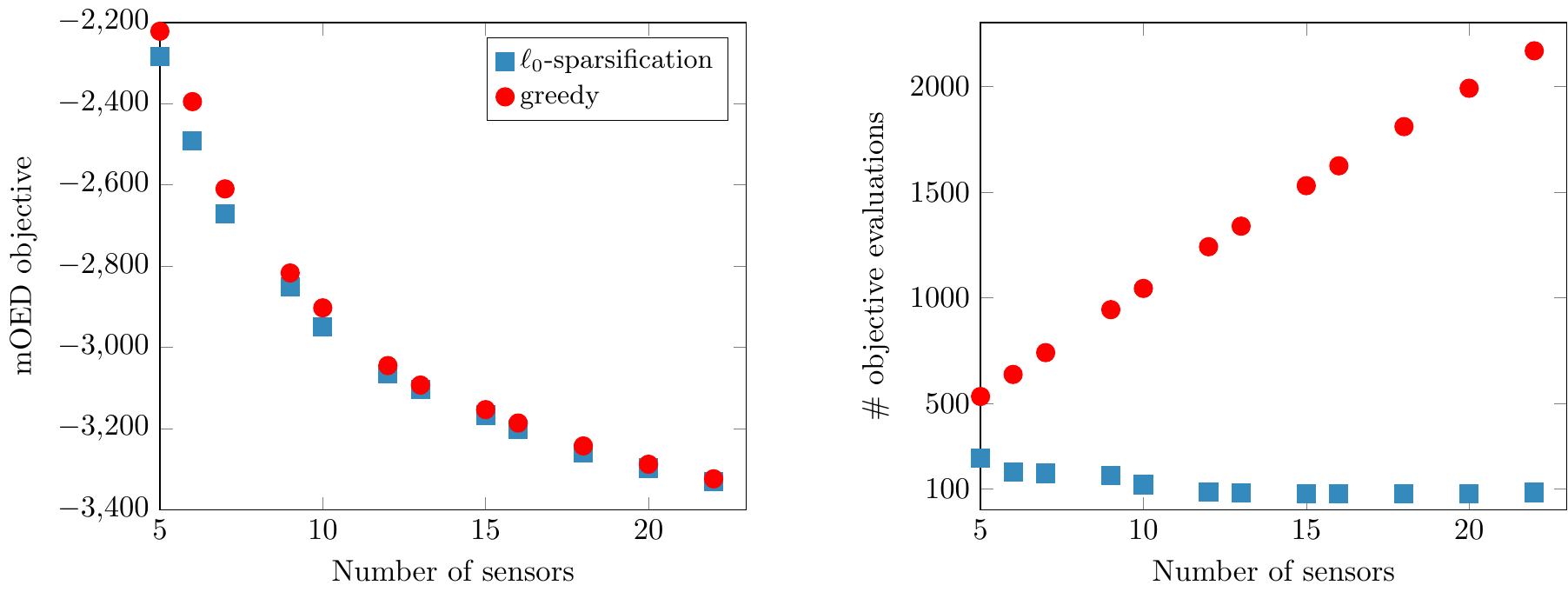} \hspace{.8cm}
  \caption{Left: \mOED{} objective values ($y$-axis) plotted
    against number of sensors ($x$-axis) for
    the greedy (red dots) and the
    $\ell_0$-sparsification approaches (blue dots). Right:
    Number of \mOED{} objective evaluations required to converge for
    computing greedy (red)
    and $\ell_0$-sparsified (blue) designs.}
  \label{fig:comparedesigns}
\end{figure}

In the remainder of this section, where we compare the performance of designs
obtained with and without marginalization, we use the greedy approach to find
optimal designs. This is motivated by the fact that the greedy approach facilitates
computing (near) optimal designs with a desired number of sensors,
while the $\ell_0$-sparsification approach only provides indirect
control on the number of sensors by changing the penalty parameter $\gamma$.

\subsection{Studying the posterior uncertainty}\label{subsec:results:posterior}
Next, we compare the performance of designs obtained by performing mOED against
those using OED with no marginalization in terms of the resulting marginal
posterior uncertainty. Note that designs obtained without marginalization,
which we simply refer to as OED, minimize the classical A-optimality criterion
$\psi$ in \cref{equ:classical_Aoptimal} whereas designs with marginalization
minimize the \mOED{} criterion in \cref{equ:obj}.

\def \pos {0.5\columnwidth}
\addtolength\abovecaptionskip{-15pt}
\begin{figure}[ht]\centering
  \label{fig:designs-and-marginal-post-stdev}
  \begin{tikzpicture}
   \node (1) at (0*\pos-0.3*\pos, 0.0*\pos){\includegraphics[width=.28\textwidth]{./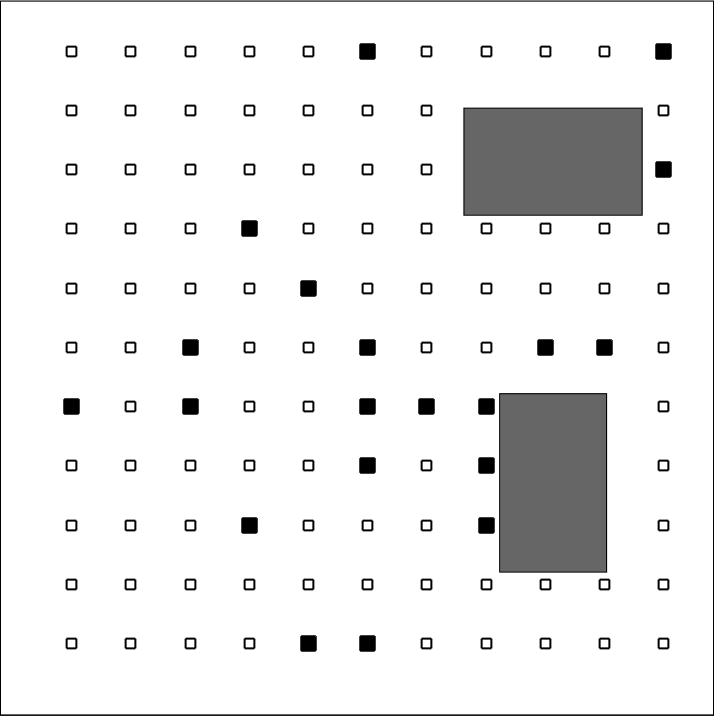}};
   \node (1) at (0.33*\pos, 0.0*\pos){\includegraphics[width=.28\textwidth]{./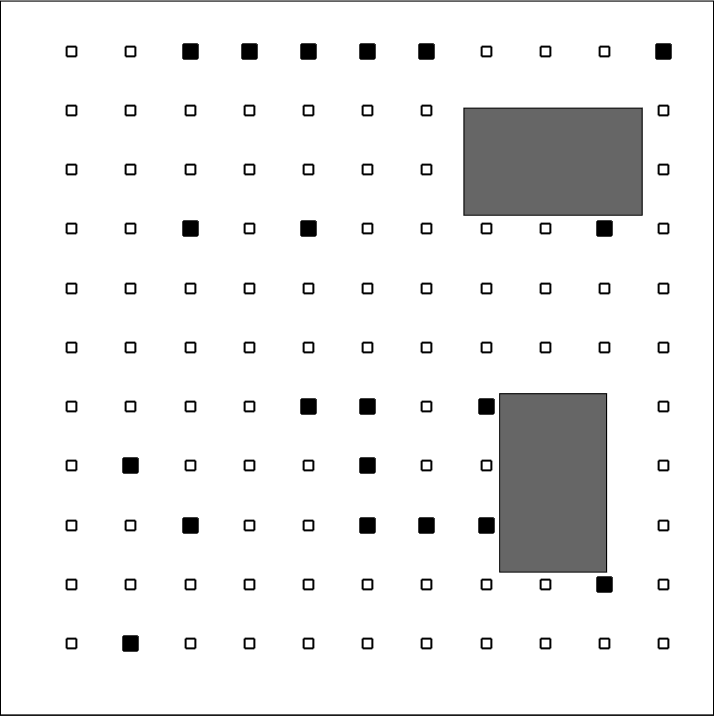}};
   \node (1) at (1.03*\pos, 0.0*\pos-0.02*\pos){\includegraphics[width=.36\textwidth]{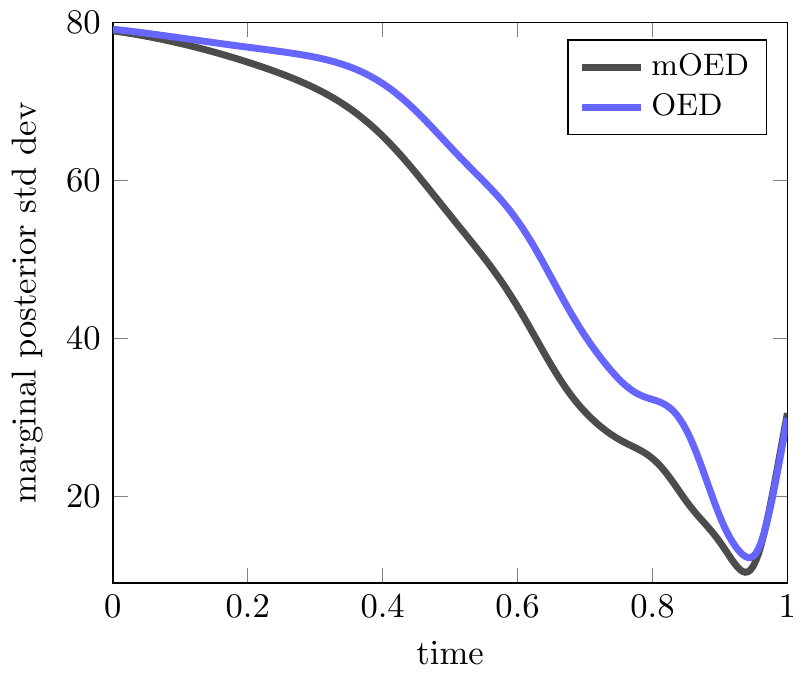}};
  \end{tikzpicture}
\caption{Shown are A-optimal designs with 20 sensors (filled
  squares) using \mOED{} (left) and OED without marginalization
  (center), i.e., the design obtained with OED neglecting secondary
  uncertainties. Inactive sensors are shown as empty
  squares. On the right, the marginal posterior standard deviation
  field (i.e., square root of the diagonal of $\postcovm(\vec{w})$
  in~\cref{eq:post_mb_blocks}) is shown for the two designs.
  \label{fig:designs}}
\end{figure}
\addtolength\abovecaptionskip{15pt}

\Cref{fig:designs} shows two designs with 20 sensors, one taking into
account the secondary uncertainty through marginalization, and one
assuming that there is no secondary uncertainty.
On the right panel of \cref{fig:designs}, the
pointwise standard deviation of the marginalized posterior
distribution are shown for the two sensor placements. The following
conclusions can be drawn. First, note that \mOED{} is superior, with
respect to the marginalized posterior variance, to the design computed
without taking the secondary uncertainty into account. Of course, this
is by construction of \mOEDs. However, the difference is significant
and exists for all times $t\in \cal T$. Second, since measurements are
taken around the final time, the uncertainty is more reduced for later
times.
However, close to the final time $T$, the
uncertainty increases again as there is not enough time for the
concentration field
to propagate to and be picked up by sensors.

\subsection{Study of MAP points}\label{subsec:results:MAP}
Next, we compare MAP points computed with the
\mOED{} and OED designs shown in \cref{fig:designs}. Note that the MAP
point for \mOED{} does not depend on a realization of the secondary parameter (see
\cref{eq:post_mb_blocks}), while it does for OED without marginalization
(see \cref{equ:weighted_posterior_fixedb}). 
In \cref{fig:design-compare}, we show the MAP point for the \mOED,
which recovers features from the ``truth'' parameter but resorts to the prior mean when little
information can be gathered from observations. 

As mentioned above, we need a realization of the secondary parameter $\iparb$
when computing the MAP point using the classical OED.
If we knew the ``truth'' $\iparb$, the additional uncertainty would vanish and the problem
reduces to
an inverse (and OED) problem with fully specified model as, e.g., in
\cite{AlexanderianPetraStadlerEtAl14}. The
corresponding MAP point, shown in blue in \cref{fig:design-compare},
slightly improved %
compared to the MAP point from
the \mOED{} formulation. However, in general the ``truth''
secondary parameter is unknown, and we only know its distribution.
If random draws from the secondary
parameter distribution are used in the MAP computation, the
model error is  underestimated and
the corresponding MAP points may be
poor. This can be seen in \cref{fig:design-compare}, where MAP points
obtained  with random
draws from the distribution of $\iparb$ are shown in red.

\begin{figure}[ht]\centering
  \includegraphics[width=.7\textwidth]{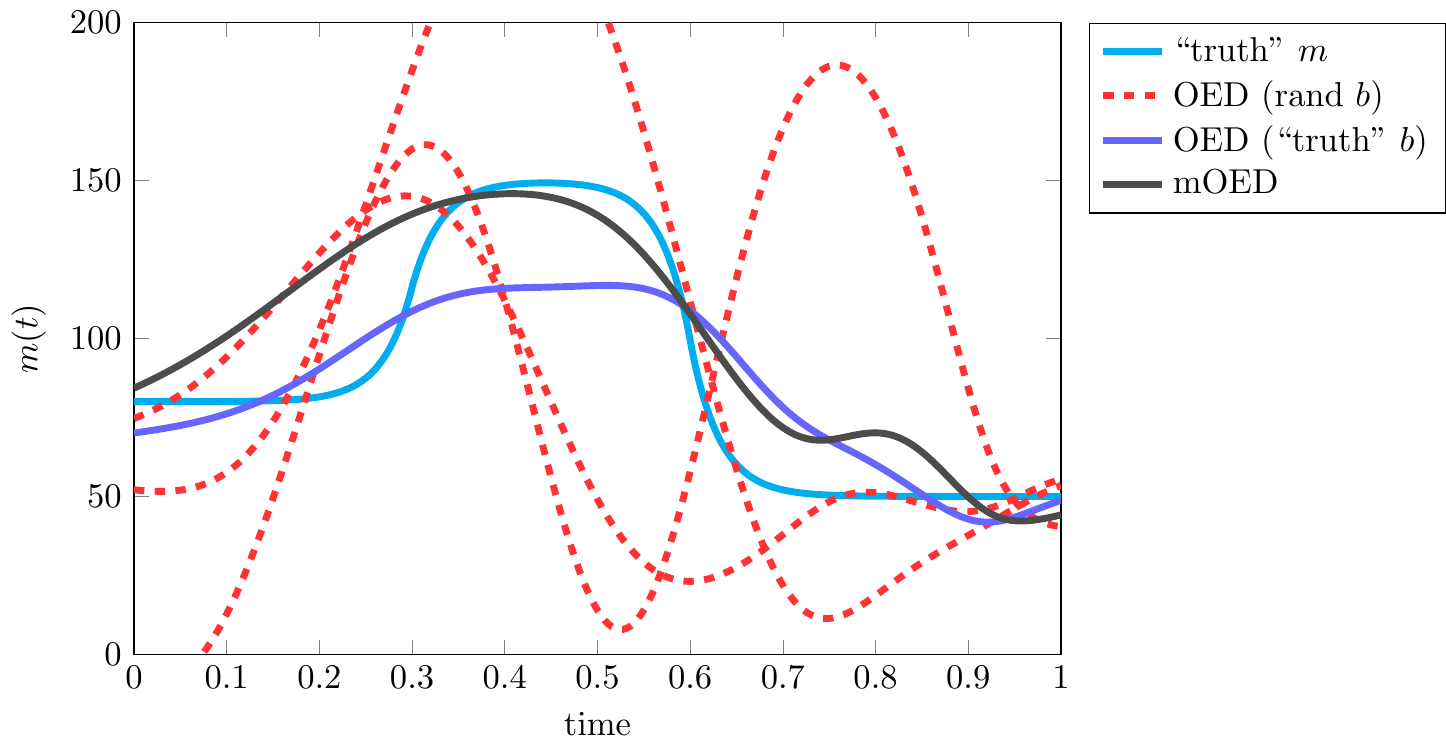}
  \caption{Comparison of MAP estimates computed with \mOED{} and OED
    without marginalization. Shown are the MAP estimates computed
    using sensor placements obtained via \mOED{} (black solid line),
    OED with the secondary parameter $\iparb$ set to the ``truth''
    (blue solid line), and OED with $\iparb$ taken as realizations
    from corresponding prior distribution (red dotted
    lines). \label{fig:design-compare}}
\end{figure}

The above discussed difference between \mOED{} and OED without
marginalization is summarized in
\cref{fig:design-compare-distribution}.  On the left, we plot the
relative $L^2(\T)$-error between the MAP point and the ``truth'' primary
parameter versus the \mOED{} objective. Using OED with
random draws for $\iparb$ result in MAP points that tend to be further
from the ``truth'' parameter than the \mOED{} MAP point. If
the ``truth'' secondary parameter is used in the computation of the MAP point using OED,
the reconstruction is slightly better than the result of \mOED{}.
It can also be seen that the \mOED{} objective is independent from
draws of the secondary parameter, as also discussed above.
The results in \cref{fig:design-compare-distribution}~(left) depend on 
the noise realizations in the synthetic data. In \cref{fig:design-compare-distribution}~(right),
we show the probability density function of the error
between the MAP point and the ``truth'' primary parameter for random
observation noise. As can be seen, 
it is slightly more likely to obtain a better MAP point when using OED
with the ``truth'' parameter than with \mOED{}. However,
it can clearly be seen that \mOED{} MAP  points significantly outperform
OED MAP points with random realizations from the prior
distribution of~$\iparb$.

\begin{figure}[ht]\centering
  \includegraphics[width=.9\textwidth]{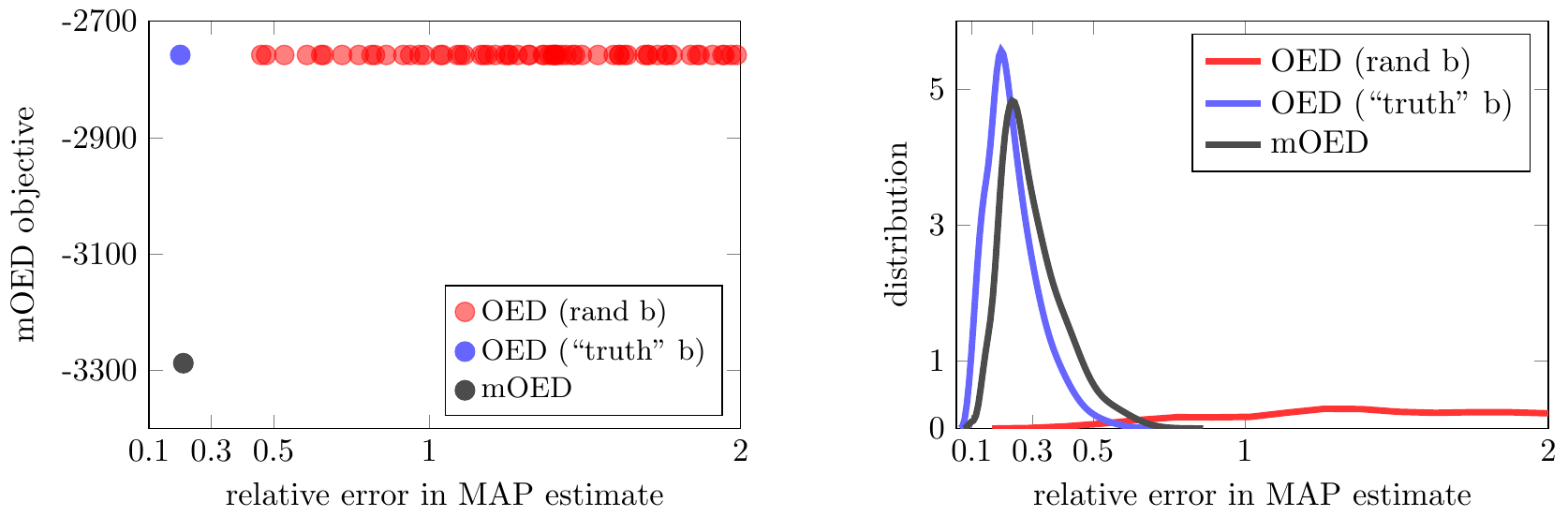}
  \caption{\emph{Left:} Relative error in the MAP estimate ($x$-axis)
    and reduction in the objective ($y$-axis) for \mOED{} (black dot),
    OED with the secondary parameter $\iparb$ set to the ``truth''
    (blue dot), and OED with $\iparb$ taken as different realizations
    of $\iparb$ (red dots). \emph{Right:} The distribution of the
    errors with various realizations of the noise in the
    data. Note that the $x$-axis is cut
    at 2 due to the long tail of the error distribution
    corresponding to OED with $\iparb$ taken as different realizations
    of $\iparb$. In this study, we used $200$ samples of the secondary parameter, 
    and $500$ samples of measurement noise.
    \label{fig:design-compare-distribution}}
\end{figure}

\section{Conclusion}\label{sec:conclusions}
In this article, we have considered linear inverse problems with reducible
model uncertainty and presented a mathematical and computational framework for
computing marginalized A-optimal sensors placements.  Our results show that it
is important to take into account additional sources of model uncertainty for
the optimal design and the inverse problem in general.
The designs computed by minimizing the marginalized
A-optimality criterion are superior compared to classical
A-optimal designs, in terms of the quality of the estimated primary parameters:
the marginalized optimal designs result in optimal uncertainty reduction as
well as more accurate MAP estimates. The overall conclusions support the claim
made in this article's title, namely that in the context of design of inverse
problems, it is good to know what you don't know. This information should be 
used when computing optimal designs.

An important direction for future work is design of nonlinear inverse problems
under model uncertainty.  A related direction is a sensitivity
analysis framework for detecting sources of model uncertainty that are most
important to the solution of the inverse problem.  This would enable
incorporating only the most important sources of model uncertainty in the OED
problem, hence reducing the computational complexity of the problem.
For deterministic inverse problems, first steps in
this direction are presented
in~\cite{SunseriHartVanBloemenWaandersAlexanderian20}.

\bibliographystyle{siamplain}
\bibliography{refs}

\begin{thebibliography}{10}

\bibitem{AkcelikBirosDraganescuEtAl05}
{\sc V.~Ak\c{c}elik, G.~Biros, A.~Dr\u{a}g\u{a}nescu, O.~Ghattas, J.~Hill, and
  B.~van Bloemen~Waanders}, {\em Dynamic data-driven inversion for terascale
  simulations: Real-time identification of airborne contaminants}, in
  Proceedings of SC2005, Seattle, 2005.

\bibitem{Alexanderian20}
{\sc A.~Alexanderian}, {\em Optimal experimental design for {B}ayesian inverse
  problems governed by {PDE}s: A review}, Preprint,  (2020).
\newblock \url{https://arxiv.org/abs/2005.12998}.

\bibitem{AlexanderianPetraStadlerEtAl14}
{\sc A.~Alexanderian, N.~Petra, G.~Stadler, and O.~Ghattas}, {\em A-optimal
  design of experiments for infinite-dimensional {B}ayesian linear inverse
  problems with regularized $\ell_0$-sparsification}, SIAM J. Sci. Comput., 36
  (2014), pp.~A2122--A2148.

\bibitem{Aravkin12VanLeeuwe12}
{\sc A.~Y. Aravkin and T.~Van~Leeuwen}, {\em Estimating nuisance parameters in
  inverse problems}, Inverse Problems, 28 (2012), p.~115016.

\bibitem{AtkinsonDonev92}
{\sc A.~C. Atkinson and A.~N. Donev}, {\em Optimum Experimental Designs},
  Oxford, 1992.

\bibitem{AttiaAlexanderianSaibaba18}
{\sc A.~Attia, A.~Alexanderian, and A.~K. Saibaba}, {\em Goal-oriented optimal
  design of experiments for large-scale {B}ayesian linear inverse problems},
  Inverse Problems, 34 (2018), p.~095009.

\bibitem{Bui-ThanhGhattasMartinEtAl13}
{\sc T.~Bui-Thanh, O.~Ghattas, J.~Martin, and G.~Stadler}, {\em A computational
  framework for infinite-dimensional {B}ayesian inverse problems. {P}art {I}:
  {T}he linearized case, with application to global seismic inversion}, SIAM J.
  Sci. Comput., 35 (2013), pp.~A2494--A2523.

\bibitem{ChalonerVerdinelli95}
{\sc K.~Chaloner and I.~Verdinelli}, {\em Bayesian experimental design: A
  review}, Statist. Sci., 10 (1995), pp.~273--304.

\bibitem{ChamonRibeiro17}
{\sc L.~Chamon and A.~Ribeiro}, {\em Approximate supermodularity bounds for
  experimental design}, in Advances in Neural Information Processing Systems,
  2017, pp.~5403--5412.

\bibitem{ConstantinescuBessacPetraEtAl20}
{\sc E.~M. Constantinescu, N.~Petra, J.~Bessac, and C.~G. Petra}, {\em
  Statistical treatment of inverse problems constrained by differential
  equations-based models with stochastic terms}, SIAM/ASA J. Uncertain.
  Quantif., 8 (2020), pp.~170--197.

\bibitem{DaPrato06}
{\sc G.~Da~Prato}, {\em An introduction to infinite-dimensional analysis},
  Springer Science \& Business Media, 2006.

\bibitem{DaonStadler18}
{\sc Y.~Daon and G.~Stadler}, {\em Mitigating the influence of boundary
  conditions on covariance operators derived from elliptic {PDEs}}, Inverse
  Probl. Imaging, 12 (2018), pp.~1083--1102.

\bibitem{FohringHaber16}
{\sc J.~Fohring and E.~Haber}, {\em Adaptive {A}-optimal experimental design
  for linear dynamical systems}, SIAM/ASA J. Uncertain. Quantif., 4 (2016),
  pp.~1138--1159.

\bibitem{HaberHoreshTenorio08}
{\sc E.~Haber, L.~Horesh, and L.~Tenorio}, {\em Numerical methods for
  experimental design of large-scale linear ill-posed inverse problems},
  Inverse Problems, 24 (2008), pp.~125--137.

\bibitem{HaberMagnantLuceroEtAl12}
{\sc E.~Haber, Z.~Magnant, C.~Lucero, and L.~Tenorio}, {\em Numerical methods
  for {A}-optimal designs with a sparsity constraint for ill-posed inverse
  problems}, Comput. Optim. Appl.,  (2012), pp.~1--22.

\bibitem{HandcockStein93}
{\sc M.~S. Handcock and M.~L. Stein}, {\em A {B}ayesian analysis of kriging},
  Technometrics, 35 (1993), pp.~403--410.

\bibitem{HermanAlexanderianSaibaba20}
{\sc E.~Herman, A.~Alexanderian, and A.~K. Saibaba}, {\em Randomization and
  reweighted $\ell_1$-minimization for {A}-optimal design of linear inverse
  problems}, SIAM J. Sci. Comput., accepted (2020).
\newblock \url{https://arxiv.org/abs/1906.03791}.

\bibitem{Jagalur-MohanMarzouk20}
{\sc J.~Jagalur-Mohan and Y.~Marzouk}, {\em Batch greedy maximization of
  non-submodular functions: Guarantees and applications to experimental
  design}, Preprint,  (2020).
\newblock \url{https://arxiv.org/abs/2006.04554}.

\bibitem{KaipioKolehmainen13}
{\sc J.~Kaipio and V.~Kolehmainen}, {\em Approximate marginalization over
  modeling errors and uncertainties in inverse problems}, Bayesian Theory and
  Applications,  (2013), pp.~644--672.

\bibitem{KaipioSomersalo05}
{\sc J.~Kaipio and E.~Somersalo}, {\em Statistical and Computational Inverse
  Problems}, vol.~160 of Applied Mathematical Sciences, Springer-Verlag, New
  York, 2005.

\bibitem{KolehmainenTarvainenArridgeEtAl11}
{\sc V.~Kolehmainen, T.~Tarvainen, S.~R. Arridge, and J.~P. Kaipio}, {\em
  Marginalization of uninteresting distributed parameters in inverse
  problems-application to diffuse optical tomography}, Int. J. Uncertain.
  Quantif., 1 (2011).

\bibitem{KovalAlexanderianStadler20}
{\sc K.~Koval, A.~Alexanderian, and G.~Stadler}, {\em Optimal experimental
  design under irreducible uncertainty for linear inverse problems governed by
  {PDEs}}, Inverse Problems, accepted (2020).
\newblock \url{https://arxiv.org/abs/1912.08915}.

\bibitem{KrauseSinghGuestrin08}
{\sc A.~Krause, A.~Singh, and C.~Guestrin}, {\em Near-optimal sensor placements
  in {G}aussian processes: Theory, efficient algorithms and empirical studies},
  J. Mach. Learn. Res., 9 (2008), pp.~235--284.

\bibitem{LindgrenRueLindstrom11}
{\sc F.~Lindgren, H.~Rue, and J.~Lindstr{\"o}m}, {\em An explicit link between
  {G}aussian fields and {G}aussian {M}arkov random fields: the stochastic
  partial differential equation approach}, J. R. Stat. Soc. Ser. B. Stat.
  Methodol., 73 (2011), pp.~423--498.

\bibitem{LuShiou02}
{\sc T.-T. Lu and S.-H. Shiou}, {\em Inverses of 2$\times 2$ block matrices},
  Comput. Math. Appl., 43 (2002), pp.~119--129.

\bibitem{Nagel17}
{\sc J.~B. Nagel}, {\em Bayesian techniques for inverse uncertainty
  quantification}, PhD thesis, ETH Zurich, 2017.

\bibitem{NicholsonPetraKaipio18}
{\sc R.~Nicholson, N.~Petra, and J.~P. Kaipio}, {\em Estimation of the {R}obin
  coefficient field in a {P}oisson problem with uncertain conductivity field},
  Inverse Problems, 34 (2018), p.~115005.

\bibitem{Ortega87}
{\sc J.~M. Ortega}, {\em Matrix theory: a second course}, The University Series
  in Mathematics, Plenum Press, New York, 1987.

\bibitem{RoininenHuttunenLasanen14}
{\sc L.~Roininen, J.~M. Huttunen, and S.~Lasanen}, {\em Whittle-{M}at{\'e}rn
  priors for {B}ayesian statistical inversion with applications in electrical
  impedance tomography}, Inverse Probl. Imaging, 8 (2014), p.~561.

\bibitem{RuthottoChungChung18}
{\sc L.~Ruthotto, J.~Chung, and M.~Chung}, {\em Optimal experimental design for
  inverse problems with state constraints}, SIAM J. Sci. Comput., 40 (2018),
  pp.~B1080--B1100.

\bibitem{ShulkindHoreshAvron18}
{\sc G.~Shulkind, L.~Horesh, and H.~Avron}, {\em Experimental design for
  nonparametric correction of misspecified dynamical models}, SIAM/ASA J.
  Uncertain. Quantif., 6 (2018), pp.~880--906.

\bibitem{Smith13}
{\sc R.~C. Smith}, {\em Uncertainty quantification: {T}heory, implementation,
  and applications}, vol.~12 of Computational Science and Engineering Series,
  SIAM, 2013.

\bibitem{Stuart10}
{\sc A.~M. Stuart}, {\em Inverse problems: {A B}ayesian perspective}, Acta
  Numer., 19 (2010), pp.~451--559.

\bibitem{SunseriHartVanBloemenWaandersAlexanderian20}
{\sc I.~Sunseri, J.~Hart, B.~van Bloemen~Waanders, and A.~Alexanderian}, {\em
  Hyper-differential sensitivity analysis for inverse problems constrained by
  partial differential equations}, Preprint,  (2020).
\newblock https://arxiv.org/abs/2003.00978.

\bibitem{Tong12}
{\sc Y.~L. Tong}, {\em The multivariate normal distribution}, Springer Science
  \& Business Media, 2012.

\bibitem{Troltzsch10}
{\sc F.~Tr\"oltzsch}, {\em Optimal Control of Partial Differential Equations:
  Theory, Methods and Applications}, vol.~112 of Graduate Studies in
  Mathematics, American Mathematical Society, 2010.

\bibitem{Ucinski05}
{\sc D.~Uci{\'n}ski}, {\em Optimal measurement methods for distributed
  parameter system identification}, CRC Press, Boca Raton, 2005.

\bibitem{WilliamsRasmussen06}
{\sc C.~K. Williams and C.~E. Rasmussen}, {\em Gaussian processes for machine
  learning}, vol.~2, MIT press Cambridge, MA, 2006.

\bibitem{Williams91}
{\sc D.~Williams}, {\em Probability with martingales}, Cambridge university
  press, 1991.

\end{thebibliography}

\end{document}